\documentclass{amsart}

\usepackage{subfigure}
\usepackage{amsmath,amssymb, amsthm,amscd}
\usepackage{enumerate}
\usepackage{fancybox}
\usepackage{graphicx,color}
\usepackage{amsfonts}
\usepackage{multicol}
\usepackage[normalem]{ulem} 
\usepackage{hyperref}
\usepackage{cleveref}
\usepackage{thmtools,thm-restate}
\usepackage[all]{xy}
\usepackage{amscd}
\usepackage{layout}

\numberwithin{equation}{section}

\newcommand{\id}{\operatorname{id}}

\newcommand{\C}{\mathbb{C}}
\newcommand{\R}{\mathbb{R}}
\newcommand{\Z}{\mathbb{Z}}
\newcommand{\Pa}{\partial}

\newcommand{\Int}{\operatorname{Int}}
\newcommand{\Crit}{\operatorname{Crit}}
\newcommand{\Critv}{\operatorname{Critv}}
\newcommand{\Ker}{\operatorname{Ker}}

\renewcommand{\Re}{\operatorname{Re}}
\renewcommand{\Im}{\operatorname{Im}}

\newcommand{\Hom}{\operatorname{Hom}}

\newcommand{\disk}{\mathbb{D}}

\theoremstyle{plain}
\newtheorem{theorem}{Theorem}[section]
\newtheorem{corollary}[theorem]{Corollary}
\newtheorem{lemma}[theorem]{Lemma}
\newtheorem{proposition}[theorem]{Proposition}
\newtheorem{problem}[theorem]{Problem}

\theoremstyle{definition}
\newtheorem{definition}[theorem]{Definition}

\newtheorem{remark}[theorem]{Remark}

\title{Realizing crosscap transpositions as monodromies of singular fibrations}
\author{Kenta Hayano}
\email{k-hayano@math.keio.ac.jp}
\address{Department of Mathematics, Faculty of Science and Technology, Keio University,
Yagami Campus, 3-14-1, Hiyoshi, Kohoku-ku, Yokohama, 223-8522, Japan}

\begin{document}


\begin{abstract}
We introduce a new type of singularity for smooth maps from
$4$-manifolds to surfaces, called an \textit{$M$-singularity}, whose critical
locus is a circle contained in a single fiber. 
We show that the monodromy around an $M$-singularity is a crosscap transposition in the mapping class group of a non-orientable surface.
We also introduce \textit{$M$-fibrations}, namely smooth maps whose singularities consist only of $M$-singularities, and prove that relations among crosscap transpositions give rise to such fibrations on non-orientable $4$-manifolds.
We then study handle decompositions associated with $M$-fibrations and
their orientation double coverings. 
In particular, we describe the
attaching circles and framings of the two $2$-handles arising from the
orientation double cover of an $M$-singularity. Using this description,
we construct a closed non-orientable $4$-manifold which admits an
$M$-fibration but admits no Lefschetz fibration. 
We further discuss
singularity-theoretic properties of the local model of an
$M$-singularity, namely its infinite $\mathcal{A}_e$-codimension and
an explicit stable perturbation.
\end{abstract}

\maketitle

\section{Introduction}\label{sec:intro}

It is well known that the mapping class group of an oriented surface is generated by Dehn twists \cite{Dehn1938generationDehntwist,Lickorish1964generationDehntwist}. 
Moreover, Dehn twists naturally appear as monodromies around Lefschetz singularities; see, e.g., \cite{Kas1980handlebodyLF}. 
This correspondence between singular fibers and mapping class groups plays a fundamental role in the study of $4$-manifolds. 
Indeed, relations among Dehn twists give rise to Lefschetz fibrations and pencils, and hence to symplectic $4$-manifolds \cite{GompfTowardTopcharasympmfd}. 
Using this correspondence, various interesting Lefschetz fibrations/pencils and symplectic $4$-manifolds have been constructed and studied via monodromy factorizations in mapping class groups, including those of \cite{Korkmaz2001noncpxLF,BH2024LFsignature}.

On the other hand, the mapping class group of a non-orientable surface is not generated by
Dehn twists alone. It is known that one also needs additional generators such as $Y$-homeomorphisms
or crosscap transpositions \cite{Lickorish1963Yhomeo,Szepietowski2012level2MCG}. Since Lefschetz
singularities realize Dehn twists as local monodromies, it is natural to ask whether these additional
generators can also be realized geometrically as monodromies of singular fibers.
However, realizing these mapping classes as monodromies of isolated singularities appears to be
difficult from a topological viewpoint. 
Indeed, a neighborhood of an isolated (singular) point in a manifold is locally orientable, and hence nearby regular fibers inherit local orientations. 
In contrast, the additional generators above are supported on one-holed Klein bottles, which are non-orientable. 
This suggests that one should consider singularities with non-isolated critical loci in order to realize such mapping classes as monodromies.

In this paper, we introduce a new type of singularity, called an \textit{$M$-singularity}, whose
critical locus is an embedded circle contained in a single fiber.%
\footnote{The symbol "$M$" comes from the fact that a tubular neighborhood of the critical circle is diffeomorphic to the product of a M\"obius band and $\mathbb{C}$; see \Cref{sec:monodromy M-fibration}.}
We show that the monodromy
around an $M$-singularity is a crosscap transposition (\Cref{thm:monodromy M-sing}). 
As in the case of Lefschetz fibrations, this result enables us to construct fibered non-orientable $4$-manifolds from relations among crosscap transpositions.
More precisely, we introduce a class of smooth maps called \textit{$M$-fibrations}, namely smooth maps whose singularities consist only of $M$-singularities, and show that relations among crosscap transpositions give rise to $M$-fibrations on non-orientable $4$-manifolds (\Cref{thm:construction M-fibration D2} and \Cref{cor:construction M-fibration S2,cor:construction M-fibration S2 section}). 
This provides a new construction method for non-orientable $4$-manifolds via mapping class groups of non-orientable surfaces.
It is worth noting that, although the mapping class group of a non-orientable surface cannot be generated solely by Dehn twists or solely by Y-homeomorphisms, it was shown in \cite{LS2017generationMCGcrosscaptrans} that, for genus at least seven, it is generated by crosscap transpositions alone.
From this viewpoint, realizing crosscap transpositions as monodromies already provides a broad source of non-orientable fibrations arising from mapping class group relations.

The orientation double covering of an $M$-fibration naturally induces a related fibration,
called an $\widetilde{M}$-fibration, whose singularities are easier to analyze from the viewpoint
of handle decompositions. 
A key point is that a single $\widetilde{M}$-singularity yields
two $2$-handles whose attaching circles form a characteristic configuration inside a regular fiber
(\Cref{thm:attcircle framing 2-handle tildeM-fibration}). This description enables us to draw Kirby diagrams of the orientation double
coverings of total spaces of $M$-fibrations.
As an application, we construct an $M$-fibration on a closed non-orientable $4$-manifold which admits
no Lefschetz fibration (\Cref{thm:diffeo type tildeX1}). 
This example shows that allowing crosscap transpositions
as monodromies leads to fibered non-orientable $4$-manifolds which cannot be obtained from relations
among Dehn twists.

The organization of this paper is as follows. In Section~2, we review mapping class groups
of surfaces and their generators, namely Dehn twists and crosscap transpositions. In Section~3,
we introduce $M$-singularities, $M$-fibrations, and their orientation double coverings, namely
$\widetilde{M}$-singularities and $\widetilde{M}$-fibrations. We prove that the monodromy around
an $M$-singularity is a crosscap transposition, and show that relations among crosscap transpositions
give rise to $M$-fibrations. In Section~4, we study handle decompositions associated with
$M$-singularities and $\widetilde{M}$-singularities. In particular, we investigate the attaching
circles and framings of the two $2$-handles arising from a $\widetilde{M}$-singularity. 
Finally, in Section~5, we present examples of $M$-fibrations, describe Kirby diagrams of their orientation double coverings, and prove the existence of $M$-fibrations on non-orientable $4$-manifolds which admit no Lefschetz fibrations.
Appendix~A is devoted to singularity-theoretic properties of
the local model of an $M$-singularity, namely the computation
of its $\mathcal{A}_e$-codimension and the construction of a
stable perturbation.

\subsection*{Conventions and notations}

Throughout the paper, we assume that manifolds are smooth and connected unless otherwise noted. 
We use $(x,y)$ to denote the standard real coordinates of $\C$ (i.e.,~$z=x+\sqrt{-1}y$). 
For $r\geq 0$ and $z\in \C$, let $D_r(z)$ be the closed disk in $\C$ with the center $z$ and the radius $r$, and put $\disk_r =D_r(0)$. 
For a smooth map $f:X\to Y$ between manifolds, we denote the set of critical points and values of $f$ by $\Crit(f)\subset X$ and $\Critv(f)\subset Y$, respectively. 

\section{Mapping class groups of surfaces}\label{sec:MCG}

In this section, we review mapping class groups of orientable and non-orientable surfaces, together with Dehn twists and crosscap transpositions. 
We also recall the orientation lifting homomorphism associated with the orientation double covering of a non-orientable surface.

Let $\Sigma_g^b$ (resp.~$N_g^b$) be an oriented (resp.~non-orientable), compact, connected genus-$g$ surface with $b$ boundary components. 
The symbol $b$ is omitted when $b=0$ (i.e.,~$\Sigma_g=\Sigma_g^0$ and $N_g=N_g^0$).
For $g,b\geq 0$, the surface $N_g^b$ can be obtained from $\Sigma_{0}^{b+g}$ by gluing $g$ copies of M\"obius bands along $g$ boundary components. 
Each such attached M\"obius band is indicated by the symbol $\otimes$ as in \Cref{fig:crosscap transposition}, and we refer to both the symbol $\otimes$ and the corresponding M\"obius band itself as a \textit{crosscap}. 
It is well-known that, up to isotopy, a properly embedded arc in a M\"obius band is either boundary-parallel or essential. 
In \Cref{fig:crosscap transposition}, any curve passing through a crosscap is assumed to intersect the corresponding M\"obius band in an essential arc. 

Let $\mathcal{D}(N_g^b)$ (resp.~$\mathcal{D}(\Sigma_g^b)$) be the group consisting of all (resp.~orientation-preserving) self-diffeomorphisms fixing the boundary pointwise. 
We denote the mapping class group $\pi_0(\mathcal{D}(N_g^b))$ and $\pi_0(\mathcal{D}(\Sigma_g^b))$ by $\mathcal{M}(N_g^b)$ and $\mathcal{M}(\Sigma_g^b)$, respectively.
We define multiplication of mapping class groups to be \textit{opposite} to composition of representatives, that is, $[\xi_1]\cdot [\xi_2] =[\xi_2\circ \xi_1]$ for diffeomorphisms $\xi_1,\xi_2$, in order to make monodromy representations homomorphisms.

Let $\rho:\Sigma_{g-1}^{2b}\to N_g^b$ be the orientation double covering. 
Since the image $\rho_\ast(\pi_1(\Sigma_{g-1}^{2b}))$ is the kernel of $w_1\circ \mathrm{Ab}:\pi_1(N_g^b)\to \Z/2\Z$, where $\mathrm{Ab}:\pi_1(N_g^b)\to H_1(N_g^b;\Z)$ is the abelianization and $w_1\in H^1(N_g^b;\Z/2\Z)\cong \Hom(H_1(N_g^b;\Z),\Z/2\Z)$ is the first Stiefel-Whitney class, each self-diffeomorphism $\phi\in \mathcal{D}(N_g^b)$ preserves $\rho_\ast(\pi_1(\Sigma_{g-1}^{2b}))$, and thus has the unique orientation-preserving lift $\widetilde{\phi}\in \mathcal{D}(\Sigma_{g-1}^{2b})$. 
By the covering homotopy property, we can define a well-defined homomorphism $\mathfrak{o}=\mathfrak{o}_g^b:\mathcal{M}(N_g^b)\to \mathcal{M}(\Sigma_{g-1}^{2b})$ by $\mathfrak{o}([\phi])=[\widetilde{\phi}]$, which we call the \textit{orientation lifting homomorphism}. 

A simple closed curve $c\subset \Int(N_g^b)$ is said to be \textit{two-sided} if the normal bundle of $c$ is trivial and \textit{one-sided} otherwise. 
Let $c\subset \Int(N_g^b)$ be a two-sided simple closed curve and $\theta$ be an orientation of a closed tubular neighborhood $\nu c$ of $c$. 
We take an orientation-preserving diffeomorphism $\kappa:\nu c \to \{z\in \C ~|~ 1/2\leq |z|\leq 2\}$ so that $\kappa(c)$ is the unit circle, and a monotone non-increasing function $\varrho:\R_{\geq 0}\to [0,1]$ so that $\varrho(r) = 1$ for $r<1-\epsilon$ and $\varrho(r)=0$ for $r>1+\epsilon$ for a sufficiently small $\epsilon>0$. 
We define the \textit{right-handed Dehn twist} $t_{c,\theta}\in \mathcal{M}(N_g^b)$ along $c$ with respect to $\theta$ as an isotopy class of the diffeomorphism $T_{c,\theta}:N_g^b\to N_g^b$ defined as follows (cf.~\Cref{fig:Dehn twist}): 
\[
T_{c,\theta}(x) = \begin{cases}
\kappa^{-1}(\exp(2\pi\sqrt{-1}\varrho(|z|))z) & (x\in \nu c, z=\kappa (x))\\
x & (\mbox{otherwise}).
\end{cases}
\]
\begin{figure}[htbp]
\includegraphics{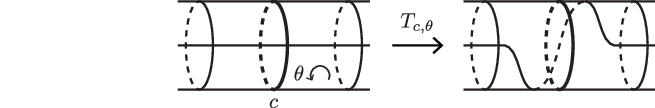}
\caption{Behavior of $T_{c,\theta}$ in $\nu c$. }
\label{fig:Dehn twist}
\end{figure}
Note that $t_{c,\theta}$ does not depend on the choices of $\nu c, \kappa,\varrho,\epsilon$. 
For a simple closed curve $\widetilde{c}\subset \Int(\Sigma_g^b)$, the normal bundle of $\widetilde{c}$ is always trivial and the orientation of $\Sigma_g^b$ induces that of $\nu \widetilde{c}$, in particular one can define the right-handed Dehn twist along $\widetilde{c}$ with respect to this orientation in the same way as above, which we denote by $t_{\widetilde{c}}\in \mathcal{M}(\Sigma_g^b)$. 
Again, let $\rho:\Sigma_{g-1}^{2b}\to N_g^b$ be the orientation double covering. 
The preimage $\rho^{-1}(c)$ is a disjoint union of two circles. 
The restriction of $\rho$ to a neighborhood of one component is orientation-preserving (with respect to $\theta$), while that to a neighborhood of the other is orientation-reversing. 
We denote the former component by $\widetilde{c}_1$ and the latter by $\widetilde{c}_2$. 
It is easy to check that $\mathfrak{o}(t_{c,\theta})$ is equal to $t_{\widetilde{c}_1}t_{\widetilde{c}_2}^{-1}$. 

Let $c \subset \Int(N_g^b)$ be a one-sided simple closed curve, and $d\subset \Int(N_g^b)$ be an oriented two-sided simple closed curve intersecting $c$ at a single point transversely. 
A regular neighborhood $\nu(c\cup d)$ of $c\cup d$ is a one-holed Klein bottle. 
Let $\epsilon>0$ be a sufficiently small number and $K$ be a surface obtained by gluing two copies of M\"obius bands to $\disk_2\setminus \Int (D_\epsilon(1)\sqcup D_\epsilon(-1))$ along $\Pa D_\epsilon(1)\sqcup \Pa D_\epsilon(-1)$. 
We take a diffeomorphism $\kappa':\nu (c\cup d) \to K$ so that $\kappa'(c)$ is isotopic to the core circle of the M\"obius band attached along $\Pa D_\epsilon(1)$ and $\kappa'(d)\cap \left(\disk_2\setminus \Int (D_\epsilon(1)\sqcup D_\epsilon(-1))\right)$ is close to the unit circle with counterclockwise orientation (cf.~the left of \Cref{fig:crosscap transposition}).
We also take a monotone non-increasing smooth function $\varrho':\R_{\geq 0}\to [0,1]$ so that $\varrho'(r)=1$ for $r\leq 4/3$ and $\varrho'(r)=0$ for $r \geq 5/3$. 
We define the \textit{crosscap transposition} $u_{c,d}\in \mathcal{M}(N_g^b)$ as an isotopy class of the diffeomorphism $U_{c,d}:N_g^b\to N_g^b$ satisfying: 
\[
U_{c,d}(x) = \begin{cases}
{\kappa'}^{-1}\left(\exp(\pi \sqrt{-1}\varrho'(|z|))z\right) & (x\in \nu (c\cup d), \kappa'(x)=z\in \disk_2\setminus \Int (D_\epsilon(1)\sqcup D_\epsilon(-1)))\\
x & (x \not\in \nu (c\cup d)).
\end{cases}
\]
\begin{figure}[htbp]
\includegraphics{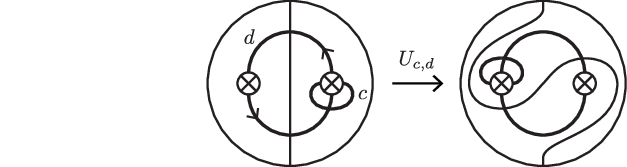}
\caption{Behavior of $U_{c,d}$ in $\nu (c\cup d)\cong K$. }
\label{fig:crosscap transposition}
\end{figure}%
Note that the above formula does not specify the value $U_{c,d}(x)$ for $x$ lying in the two crosscaps. 
Nevertheless, the element $u_{c,d}\in \mathcal{M}(N_g^b)$ is uniquely determined since the mapping class group $\mathcal{M}(N_1^1)$ is trivial (\cite[Theorem~3.4]{Epstein1966curve2mfd}). 
Note also that $u_{c,d}$ does not depend on the choices of $\nu (c\cup d), \kappa',\varrho'$. 
It is easy to check that the square $u_{c,d}^2$ is equal to $t_{\delta,\theta}$, where $\delta = \kappa^{-1} (\Pa \disk_2)\subset \nu (c\cup d)$ and $\theta$ is the pullback by $\kappa$ of the standard orientation of $\disk_2$. 


\section{$M$-fibrations and their monodromies}\label{sec:monodromy M-fibration}

In this section, we introduce $M$-singularities and $M$-fibrations, together with their orientation double coverings, namely $\widetilde{M}$-singularities and $\widetilde{M}$-fibrations. 
We define their monodromies and show that the monodromy around each singular value is a crosscap transposition. 
We also explain how to construct $M$-fibrations from relations in mapping class groups.

Let $\mathfrak{m},\mathfrak{a}:\Z\times \R^2\to \R^2$ be the $\Z$-actions on $\R^2$ defined by $\mathfrak{m}(n,(r,s)) = ((-1)^nr,s+n)$ and $\mathfrak{a}(n,(r,s)) = (r,s+n)$, respectively. 
We denote the orbit space of the action $\mathfrak{m}$ (resp.~$\mathfrak{a}$) by $M$ (resp.~$A$), and let $[r,s]_m\in M$ and $[r,s]_a\in A$ the points represented by $(r,s)\in \R^2$.  
The quotient maps $\pi_m:\R^2\to M$ and $\pi_a:\R^2\to A$ are the universal coverings, in particular these induce the local coordinates, which we denote by $r,s$. 
Note that the map $\pi:A\to M$ defined by $\pi([r,s]_a) = [r,2s]_m$ is the orientation double covering. 
We define a map $F_m:M\times \C\to \C$ by $F_m([r,s]_m,z)=r\exp(\pi\sqrt{-1}s) + z^2$, and put $F_a=F_m\circ (\pi\times \id_{\C}):A\times \C\to \C$ (i.e.,~$F_a([r,s]_a,z)=r\exp(2\pi\sqrt{-1}s)+z^2$). 
It is easy to check that $\Crit(F_m)$ and $\Critv(F_m)$ are respectively equal to $\{([0,s]_m,0)\in M\times \C~|~ s\in \R\}\cong S^1$ and $\{0\}\subset \C$. 

\begin{definition}\label{def:M- tilde{M}-singularity}

Let $f:X\to Z$ be a smooth map from a $4$-manifold $X$ to a surface $Z$. 
A connected component $S\subset \Crit(f)$ or a point in it is called an \textit{$M$-singularity} of $f$ if
\begin{enumerate}

\item 
$S$ is an embedded circle in $X$, 

\item 
$f(S)$ consists of a single point $q$, and 

\item 
there exist a diffeomorphism $\Phi:\nu S \to M\times \C$ defined on a tubular neighborhood $\nu S$ of $S$ and a complex chart $(V,\psi)$ around $q$ satisfying $\psi\circ f\circ \Phi^{-1} = F_m$. 

\end{enumerate}

\noindent
Similarly, $S$ or a point in it is called an \textit{$\widetilde{M}$-singularity} if it satisfies (1), (2), and (3) with $F_m$ replaced by $F_a$. 

\end{definition}

\noindent
Note that $X$ is non-orientable if $f:X\to Z$ has an $M$-singularity, and the composition $f\circ \mathcal{P}$ has an $\widetilde{M}$-singularity, where $\mathcal{P}:\widetilde{X}\to X$ is the orientation double covering.

\begin{definition}\label{def:M-fibration}

Suppose that $X$ and $Z$ are compact. 
A smooth map $f:X\to Z$ is called an \textit{$M$-fibration} if it satisfies the following conditions:

\begin{enumerate}[(I)]

\item 
$\Pa X = f^{-1}(\Pa Z)$, 

\item 
$\Crit(f)$ is contained in $\Int X$ and consists of $M$-singularities, 

\item 
each fiber of $f$ contains at most one component of $\Crit(f)$. 

\end{enumerate}

\noindent
The genus of a regular fiber of $f$ is called the \textit{genus} of $f$. 
We also define an \textit{$\widetilde{M}$-fibration} as a smooth map $f$ satisfying (I), (III), and (II) with $M$-singularities replaced with $\widetilde{M}$-singularities. 
As before, the composition of an $M$-fibration and the orientation double covering of the total space is an $\widetilde{M}$-fibration. 

\end{definition}

Let $f:X\to Z$ be a genus-$g$ $M$-fibration with $\Crit(f)\neq \emptyset$ and $q_0\in Z\setminus \Critv(f)$. 
As in the case of Lefschetz fibrations, we can define a \textit{monodromy representation} $\rho_f:\pi_1(Z\setminus \Critv(f),q_0)\to \mathcal{M}(N_g)$ of $f$ as follows. 
First, fix a diffeomorphism $\phi:N_g \to f^{-1}(q_0)$. 
For a loop $a:[0,1]\to Z\setminus \Critv(f)$ based at $q_0$, choose a trivialization $\Psi:[0,1]\times N_g \to E(a^\ast f)$ so that $\Psi(0,x) = (0,\phi(x))$, where $E(a^\ast f)$ is the total space of the pullback of $f$ (as a surface bundle) by $a$, that is, $E(a^\ast f) = \{(t,x)\in [0,1]\times X~|~ a(t)=f(x)\}$. 
We define a diffeomorphism $\psi_1:N_g\to f^{-1}(q_0)$ by $\psi_1(x)=p_2\circ \Psi(1,x)$, and set $\rho_f([a]) = [\phi^{-1}\circ \psi_1]$. 
Let $q\in \Critv(f)$, $\gamma \in Z$ be a path from $q_0$ to $q$ that meets $\Critv(f)$ only at its endpoint $q$, and $\mu$ be the based loop at $q_0$ obtained by following $\gamma$ to a point near $q$, going once around $q$ along a small circle, and then returning to $q_0$ along $\gamma$. 
We call such a loop or the corresponding element in $\pi_1(Z\setminus \Critv(f),q_0)$ the \textit{meridian loop} of $\gamma$. 

\begin{theorem}\label{thm:monodromy M-sing}

The monodromy $\rho_f(\mu)$ of $f$ along $\mu$ is a crosscap transposition. 

\end{theorem}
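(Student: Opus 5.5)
The plan is to reduce the statement to a computation in the local model $F_m$. The monodromy along $\mu$ is the product of the parallel transport along $\gamma$, the monodromy around a small circle $\Pa D_\delta(q)$, and the transport back along $\gamma$. Conjugating by the transport along $\gamma$ sends a crosscap transposition $u_{c,d}$ to another crosscap transposition, namely $u_{\phi(c),\phi(d)}$ for the corresponding diffeomorphism $\phi$. So it suffices to prove the following: the monodromy of $f$ over $\Pa D_\delta(q)$, computed on a fiber near $q$, is a crosscap transposition in the mapping class group of that fiber.

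Via the charts $\Phi$ and $\psi$, the fiber $f^{-1}(q')$ for $q'$ near $q$ splits into two pieces. One piece is the local fiber $F_m^{-1}(w)\cap(\text{compact part of }M\times\C)$, where $w=\psi(q')$. The other piece lies outside $\nu S$, where $f$ is a locally trivial fibration over $D_\delta(q)$. I would choose a horizontal distribution that is trivial outside a compact subset of $\nu S$; then the monodromy is supported in the local fiber, and the task is a computation in the model.

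Let $w=\epsilon e^{\sqrt{-1}\pi\tau}$. In coordinates on $\R^2\times\C$, the local fiber is $\{r e^{\pi\sqrt{-1}s}+z^2=w\}$ modulo $(r,s,z)\sim(-r,s+1,z)$. Solving for $z$ gives $z^2=w-re^{\pi\sqrt{-1}s}$. For each fixed $s$ this is a double cover of the $r$-line branched at the single point $r=\epsilon e^{\pi\sqrt{-1}(\tau-s)}$, which exists only when $\tau-s\in\Z$. Over the annulus of $s$-values, the branch points lie at $r=\pm\epsilon$ near $s\equiv\tau$.

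I expect the local fiber to be a one-holed Klein bottle with the following description. It is a double cover of the M\"obius band $M$ (truncated at $|r|\le R$), branched at the two points $[\pm\epsilon,\tau]_m$. These two points are in fact a single point of $M$, since $[\epsilon,\tau]_m=[-\epsilon,\tau+1]_m$. I would verify the topology by an Euler characteristic count; the branching locus must be handled carefully, and this is essentially the point where the local fiber acquires its two crosscaps.

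The cleaner route is to view the local fiber through the projection $(r,s,z)\mapsto z$, restricted to the set where $|z|$ is not too large. For each $z$ with $z^2\neq w$, the equation $re^{\pi\sqrt{-1}s}=w-z^2$ defines a single point of $M$ up to the identification, provided $r$ is allowed to be signed. Concretely:
\begin{itemize}
\item when $w-z^2\neq 0$ the solutions $(r,s)$ form one $\mathfrak m$-orbit, namely a single point;
\item when $z^2=w$ the solution set is the entire core circle $r=0$.
\end{itemize}
So the local fiber is $\C_z$ blown up at the two points $\pm\sqrt{w}$, where each point is replaced by a circle of directions modulo sign, i.e.\ a real projective line. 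That is a disk with two crosscaps at $\pm\sqrt w$, which is exactly the model surface $K$.

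As $\tau$ goes from $0$ to $2$, the points $\pm\sqrt{w}=\pm\sqrt\epsilon e^{\pi\sqrt{-1}\tau/2}$ rotate by angle $\pi$ and exchange. Meanwhile the region of large $|z|$, where the fibration is nearly $z\mapsto z^2$ and unaffected by $w$, stays fixed. This is precisely $U_{c,d}$ with the crosscaps placed at $\pm1$ after rescaling.

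The main obstacle is making the blow-up identification rigorous and smooth near the boundary. This means checking that the map to the real-oriented-projective blow-up of $\C$ at $\pm\sqrt w$ is a diffeomorphism, and that the trivialization near $\Pa$ of the local fiber matches the cutoff $\varrho'$. I would handle this with an explicit isotopy, using the fact that $\mathcal{M}(N_1^1)$ is trivial, so that the behaviour inside the crosscaps is irrelevant.
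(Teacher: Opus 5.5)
Your proposal is correct and follows essentially the paper's route. Your description of the local fiber through the $z$-projection, as $\C$ blown up at $\pm\sqrt{w}$, is exactly \Cref{lem:topology fiber Fm}: there $\beta_w$ is a diffeomorphism away from small disks around $\pm\sqrt{w}$, and the preimages of those disks are M\"obius bands. The ``explicit isotopy'' you defer is the paper's trivialization $\Lambda$, which sends $z$ to $\exp(\pi\sqrt{-1}\varrho'(|z|)t)z$ and determines the $M$-coordinate from the equation $F_m=a(t)$ (near $z=\pm 1$ this is just $[r,s+2t]_m$). It is matched to the outer part by choosing the metric so that horizontal lifts preserve $z$ near $M\times\Pa\disk_2$, which is the precise form of your choice of horizontal distribution.
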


\begin{lemma}\label{lem:topology fiber Fm}

For $0\neq w \in \C$, the fiber $F_m^{-1}(w)$ is diffeomorphic to the once-punctured Klein bottle. 

\end{lemma}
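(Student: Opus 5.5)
The plan is to identify $F_m^{-1}(w)$ explicitly with the plane $\C$ in which two points have been replaced by crosscaps. The main tool is the observation that the first summand of $F_m$ is a real blow-up of $\C$ at the origin.

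\emph{Step 1: the map $\beta$.} Define $\beta:M\to\C$ by $\beta([r,s]_m)=r\exp(\pi\sqrt{-1}s)$. This is well defined, since $(-r,s+1)$ gives $-r\exp(\pi\sqrt{-1}s)\exp(\pi\sqrt{-1})=r\exp(\pi\sqrt{-1}s)$. I will check three properties.
\begin{itemize}
\item $\beta$ maps the core circle $\{r=0\}$ to $0$.
\item $\beta$ restricts to a diffeomorphism from $M\setminus\{r=0\}$ onto $\C\setminus\{0\}$. Indeed, the strip $r>0$, $s\in[0,2)$ is a fundamental domain for $M\setminus\{r=0\}$, and on it $\beta$ is polar coordinates.
\item $\beta$ is a submersion away from the core, and $d\beta$ has rank $1$ along the core.
\end{itemize}
Then $F_m(p,z)=\beta(p)+z^2$, so
\[
F_m^{-1}(w)=\{(p,z)\in M\times\C \mid \beta(p)=w-z^2\}.
\]
Since $\Critv(F_m)=\{0\}$ and $w\neq 0$, this fiber is a smooth closed surface properly embedded in $M\times\C$.

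\emph{Step 2: projection to the $z$-coordinate.} Consider the projection $p_2:F_m^{-1}(w)\to\C$ given by $(p,z)\mapsto z$. Let $\pm\zeta$ be the two distinct square roots of $w$.
\begin{itemize}
\item For $z\neq\pm\zeta$, we have $w-z^2\neq 0$, so the point $p=\beta^{-1}(w-z^2)$ is unique and depends smoothly on $z$. Hence $p_2$ is a diffeomorphism from $F_m^{-1}(w)\setminus p_2^{-1}(\{\pm\zeta\})$ onto $\C\setminus\{\pm\zeta\}$.
\item Over $z=\pm\zeta$, the fiber $p_2^{-1}(\pm\zeta)$ is a copy of the core circle of $M$.
\end{itemize}

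\emph{Step 3: neighborhoods of the two exceptional circles.} This is the step where care is needed. Near $z_0=\pm\zeta$, the function $g(z)=w-z^2$ has nonzero derivative $-2z_0$, because $w\neq 0$. So $u=g(z)$ is a local coordinate on a small disk $U\ni z_0$.

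In the coordinates $(p,u)$, the set $F_m^{-1}(w)\cap(M\times U)$ becomes
\[
\{(p,u)\mid \beta(p)=u,\ u\in g(U)\},
\]
which is the graph of $\beta$ over $\beta^{-1}(g(U))$. Hence it is diffeomorphic, via projection to $p$, to $\beta^{-1}(g(U))$. By Step 1, this is an open Möbius band: a tubular neighborhood of the core circle, with its complement corresponding to the punctured disk $g(U)\setminus\{0\}$. These identifications agree with those of Step 2 on the overlap, since both are given by projections and $\beta$.

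\emph{Step 4: assembling the pieces.} Combining Steps 2 and 3, $F_m^{-1}(w)$ is obtained from $\C$ by removing small open disks around $\pm\zeta$ and gluing a Möbius band along each boundary circle. That is, it is $\C$ with two crosscaps, which is $N_2$ minus a point, the once-punctured Klein bottle.

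The main obstacle is the smooth compatibility in Step 3, which requires the local graph description of the fiber over a neighborhood of each simple zero of $g$. Once that is in place, the rest is bookkeeping.
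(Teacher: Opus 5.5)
Your argument is essentially the paper's: away from $\pm\sqrt{w}$ the fiber is identified with $\C$ via the $z$-projection, and near $\pm\sqrt{w}$ it is identified via projection to $M$ with the preimage of a neighborhood of $0$ under $[r,s]_m\mapsto r\exp(\pi\sqrt{-1}s)$. Your graph description is a tidy way to get the injectivity that the paper extracts from injectivity of $q_w^\pm$. The one claim Step 1 does not justify is that $\beta^{-1}(g(U))$ is a M\"obius band when $U$ is a small round disk in the $z$-plane. This is exactly where the paper does its real work: it shows $g(U)$ is convex via univalent-function theory, so that the preimage has the form $\{-\mathfrak{r}(s+1)\le r\le\mathfrak{r}(s)\}$. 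You can sidestep this by taking $U$ to be the component of $g^{-1}(\disk_\delta)$ containing $z_0$ for some $0<\delta<|w|$; this is a closed disk mapped diffeomorphically onto $\disk_\delta$, because $D_\delta(w)$ misses $0$. The piece is then exactly the closed M\"obius band $\{[r,s]_m \mid |r|\le\delta\}$, which also supplies the closed disks and boundary circles your gluing in Step 4 needs.
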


\begin{proof}[Proof of \Cref{lem:topology fiber Fm}]

Let $p_1:M\times \C\to M$ and $p_2:M\times \C\to \C$ be the projections, and put $\alpha_w=p_1|_{F_m^{-1}(w)}:F_m^{-1}(w)\to M$ and $\beta_w = p_2|_{F_m^{-1}(w)}:F_m^{-1}(w)\to \C$. 
One can easily deduce from the implicit function theorem that $\beta_w$ is a local diffeomorphism at $([r,s]_m,z)\in F_m^{-1}(w)$ unless $r=0$, or equivalently $z^2=w$. 
Let $\eta$ be a square root of $w$. 
For a sufficiently small $\epsilon >0$, the restriction of $\beta_w$ on the preimage $\beta_w^{-1}\left(\overline{\C \setminus (D_\epsilon(\eta)\sqcup D_\epsilon(-\eta))}\right)$ is a proper bijection, and thus a diffeomorphism. 
Hence, it suffices to show that the preimage $\beta_w^{-1}(D_\epsilon(\pm \eta))$ is diffeomorphic to the M\"obius band. 

We define $\phi:M\to \C$ by $\phi([r,s]_m)=r\exp (\pi \sqrt{-1}s)$ and $q_w:\C\to \C$ by $q_w^\pm(\xi)=w-(\pm \eta + \xi)^2$. 
Since $\alpha_w$ is proper and a local diffeomorphism at any point on $\beta_w^{-1}(D_\epsilon(\pm \eta))$, the restriction $\alpha_w|_{\beta_w^{-1}(D_\epsilon(\pm \eta))}$ is a local diffeomorphism on its image $\phi^{-1}(q_w^\pm(\disk_\epsilon))$. 
The function $q_w^\pm|_{\disk_\epsilon}$ is injective and the following holds for any $\xi\in \disk_\epsilon$ (and a sufficiently small $\epsilon>0$):
\[
\Re\left(1 + \frac{\xi\cdot  (q_w^\pm)''(\xi)}{(q_w^\pm)'(\xi)}\right) = \Re\left(1 + \frac{-2\xi}{-2(\pm \eta +\xi)}\right)\geq 1 - \left|\frac{\xi}{\pm \eta+\xi}\right|>0. 
\]
As a standard consequence of the theory of univalent functions, the above inequality implies that $q_w^\pm(\disk_\epsilon)$ is a convex region (see, e.g.,~\cite{Duren1983univalentfunc}).
In particular, there exists a periodic function $\mathfrak{r}:\R\to \R_{>0}$ with period $2$ such that $\phi^{-1}(q_w(\disk_\epsilon))$ is equal to $\{[r,s]_m\in M~|~ -\mathfrak{r}(s+1)\leq r \leq \mathfrak{r}(s)\}$, which is diffeomorphic to the M\"obius band. 
\end{proof}

\begin{remark}\label{rem:branched cover alpha}
One can also obtain \Cref{lem:topology fiber Fm} by showing that $\alpha_w:F_m^{-1}(w)\to M$ is a double branched covering branched at a single point.
The same argument applies to $F_a$. 
Namely, the restriction of the projection to the first component, $\widetilde{\alpha}_w:F_a^{-1}(w) \to A$, is a double branched covering branched at two points. 
This observation will play an important role later in describing a Kirby diagram of the orientation double covering of the total space of an $M$-fibration.
\end{remark}

\begin{proof}[Proof of \Cref{thm:monodromy M-sing}]
Let $\Phi:\nu S\to M\times \C$ and $(V,\psi)$ be a diffeomorphism and a complex chart around $q$ satisfying $\psi\circ f \circ \Phi^{-1}=F_m$.
Since $X$ is compact and $f^{-1}(q)$ contains one component of $\Crit(f)$, we may assume that $V\cap \Critv(f)=\{q\}$.  
We define a path $a:[0,1]\to Z$ by $a(t) = \psi(\exp (2\pi \sqrt{-1}t))$. 
It is enough to show that the monodromy of $f$ along $a$ is a crosscap transposition. 
In what follows, we identify $V$ with $\C$ via $\psi$. 

As in the proof of \Cref{lem:topology fiber Fm}, one can show that the restriction of the projection to the second component, $\beta_w:F_m^{-1}(w)\cap (M\times (\C\setminus \Int(\disk_2))) \to \C\setminus \Int(\disk_2)$, is a diffeomorphism for $w\in \disk_1$ (even when $w=0$).
In particular, we can define a trivialization $\Psi:\disk_1\times (\C\setminus\Int (\disk_2))\to F_m^{-1}(\disk_1)\cap (M\times (\C\setminus \Int(\disk_2)))$ of $F_m$ (as a surface bundle) by $\Psi(w,z)=\beta_w^{-1}(z)$. 
We take a Riemannian metric $g$ of $f^{-1}(\disk_1)$ so that $\Phi_\ast g$ coincides with the pushforward of the product metric of $\disk_1\times (\C\setminus \Int(\disk_2))$ by $\Psi$ on some neighborhood of $\Psi (\disk_1\times \Pa \disk_2)$. 
Since $\Crit(f)\cap f^{-1}(\disk_1)=S$, we can define a trivialization $\Theta:\disk_1 \times (f^{-1}(0)\setminus \Phi^{-1}(M\times \disk_2)) \to f^{-1}(\disk_1)\setminus \Phi^{-1}(M\times \disk_2)$ by $\Theta(w,x) = c_{w,x}(1)$, where $c_{w,x}:[0,1]\to X$ is a lift with $c_{w,x}(0)=x$ of the segment $t \mapsto t w \in \disk_1$ by the horizontal distribution $(\Ker df)^\perp$.
Note that, by the choice of the metric $g$, $\Theta$ is equal to $\Phi^{-1}\circ\Psi\circ (\id_{\disk_1}\times (\beta_0\circ \Phi))$ on a neighborhood of $\disk_1\times \Phi(F_m^{-1}(0)\cap (M\times \Pa\disk_2))$.

Let $\varrho':\R_{\geq 0} \to [0,1]$ be a function taken when defining a crosscap transposition in \Cref{sec:MCG}. 
We define a smooth function $w:[0,1]\times \C \to \C$ by
\[
w(t,z) = \exp(2\pi\sqrt{-1}t) - \exp(2\pi\sqrt{-1}\varrho'(|z|)t) z^2. 
\]
Note that $w(t,z)=0$ if and only if $z=\pm 1$. 
We further define $\widetilde{s}:[0,1]\times \R\times (\C\setminus \{\pm 1\}) \to \R$ as follows: 
\begin{align*}
\widetilde{s}(t,s,z) =& s+\int_0^t\frac{1}{\pi}\Im \left(\frac{\frac{\Pa w}{\Pa t}(t,z)}{w(t,z)}\right)dt\\
=&s + \int_0^t\frac{1}{\pi}\Im \left(\frac{2\pi\sqrt{-1}\exp(2\pi\sqrt{-1}t) - 2\pi \sqrt{-1}\varrho'(|z|)\exp(2\pi \sqrt{-1}\varrho'(|z|)t)z^2}{\exp(2\pi\sqrt{-1}t) - \exp(2\pi \sqrt{-1}\varrho'(|z|)t)z^2}\right)dt.
\end{align*}
Since $\widetilde{s}(t,s,z)=s+2t$ when $|z|\leq 4/3$, we can extend $\widetilde{s}$ to the function on $[0,1]\times \R\times \C$, which we denote by the same symbol $\widetilde{s}$. 
For $t\in [0,1]$, $z\in \C$, $r\in \R$ with $|r|=|1-z^2|$, we define $\widetilde{r}(t,r,z)\in \R$ as follows: 
\[
\widetilde{r}(t,r,z) = \begin{cases}
r & (|z| < 4/3)\\
|w(t,z)| & (z\neq \pm 1\land r>0)\\
-|w(t,z)| & (z\neq \pm 1\land r<0).
\end{cases}
\]
The value $\widetilde{r}(r,t,z)$ is indeed well-defined since $|w(t,z)|=|1-z^2|=|r|$ when $|z|\leq 4/3$. 
Lastly, we define $\Lambda:[0,1]\times f^{-1}(1)\to E(a^\ast f)$ as follows: \[
\Lambda(t,x) = \begin{cases}
\left(t,\Theta\left(a(t), p_2(\Theta^{-1}(x))\right)\right) & (x \not\in \Phi^{-1}(M\times \disk_2))\\
\left(t,\Phi\left([\widetilde{r}(t,r,z),\widetilde{s}(t,s,z)]_m,\exp(\pi \sqrt{-1}\varrho'(|z|)t)z\right)\right) & (x = \Phi^{-1}([r,s]_m,z)\mbox{ for }z\in \disk_2).
\end{cases}
\]
One can check that $\Lambda$ is a trivialization of the bundle $a^\ast f$. 
Hence, the monodromy of $f$ along $a$ is represented by $\Lambda_1:x\mapsto \Lambda(1,x)$, whose support is contained in $\beta_1^{-1}(\disk_2)$. 
Since $\beta_1^{-1}(z)$ is sent to $\beta_1^{-1}(\exp(\pi \sqrt{-1}\varrho'(|z|))z)$ for $z\in \disk_2 \setminus (D_\epsilon(1)\sqcup D_\epsilon(-1))$ by this representative, the monodromy of $f$ along $a$ is a crosscap transposition interchanging the two M\"obius bands appearing in the proof of \Cref{lem:topology fiber Fm}.
\end{proof}

\subsection*{Construction of $M$-fibrations from relations in mapping class groups}

As in the case of Lefschetz fibrations (see, e.g.,~\cite{Kas1980handlebodyLF,GS19994-mfd}), one can define several types of \textit{monodromy factorizations} as follows.
From a genus-$g$ $M$-fibration $f:X\to D^2$, one can obtain a sequence $\xi_1,\ldots, \xi_n$ of crosscap transpositions in $\mathcal{M}(N_g)$, each of which is a monodromy of $f$ along a meridian loop of a path in a Hurwitz system.
In particular, its product is a monodromy of $f$ along a loop homotopic to $\Pa D^2$. 
The factorization $\xi_1\cdots \xi_n$ of a monodromy along $\Pa D^2$ into a product of crosscap transpositions is called a \textit{monodromy factorization} of $f$. 
In the same manner, we can also obtain a sequence of crosscap transpositions from a genus-$g$ $M$-fibration $f:X\to S^2$, and the product of them is equal to $1\in \mathcal{M}(N_g)$. 
We call the corresponding factorization of $1\in \mathcal{M}(N_g)$ a \textit{monodromy factorization} of $f:X\to S^2$. 
Suppose that a genus-$g$ $M$-fibration $f:X\to S^2$ has sections $\sigma_1,\ldots,\sigma_b:S^2\to X$. 
The monodromy factorization $\xi_1\cdots \xi_n=1$ of $f$ can be lifted (via the capping homomorphism) to a factorization $\widetilde{\xi}_1\cdots \widetilde{\xi}_n=t_{\delta_1,\theta_1}^{a_1}\cdots t_{\delta_b,\theta_b}^{a_b}$ in $\mathcal{M}(N_g^b)$, where $\widetilde{\xi}_i$ is a crosscap transposition in $N_g^b$, $\delta_1,\ldots,\delta_b\subset N_g^b$ are the (isotopy classes of) simple closed curves parallel to the boundary components of $N_g^b$, $\theta_i$ is some orientation of $\nu\delta_i$, and $a_i$ is an integer which coincides with the Euler number of the normal bundle of $\sigma_i(S^2)$ (with some orientation) up to sign. 
We call this factorization (of $t_{\delta_1,\theta_1}^{a_1}\cdots t_{\delta_b,\theta_b}^{a_b}$) a \textit{monodromy factorization} of $f$ and $\sigma_1,\ldots,\sigma_b$. 

\begin{theorem}\label{thm:construction M-fibration D2}

For any sequence $\xi_1,\ldots, \xi_n\in \mathcal{M}(N_g)$ of crosscap transpositions, there exists an $M$-fibration $f:X\to D^2$ whose monodromy factorization is $\xi_1\cdots \xi_n$. 

\end{theorem}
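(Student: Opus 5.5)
We mimic the standard construction of Lefschetz fibrations over the disk from a sequence of Dehn twists (cf.~\cite{Kas1980handlebodyLF,GS19994-mfd}), with the local model $F_m$ replacing the Lefschetz model.

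\emph{Step 1: a local model realizing one given crosscap transposition.} Put $E_0=F_m^{-1}(\disk_1)$. By the proof of \Cref{thm:monodromy M-sing}, $F_m|_{E_0}$ is trivial as a surface bundle near the horizontal boundary $F_m^{-1}(\disk_1)\cap (M\times \Pa\disk_2)$. The monodromy of $F_m$ along $\Pa\disk_1$ is the crosscap transposition $u_0$ of the fiber $K_0=F_m^{-1}(1)\cap (M\times \disk_2)$, a one-holed Klein bottle by \Cref{lem:topology fiber Fm}; the pair $(c_0,d_0)$ consists of the core of one M\"obius band and a two-sided curve through both M\"obius bands. Now let $\xi=u_{c,d}$ be a crosscap transposition in $\mathcal{M}(N_g)$. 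Choose a regular neighborhood $\nu(c\cup d)$ and a diffeomorphism $\kappa'':\nu(c\cup d)\to K_0$ sending $c,d$ to curves isotopic to $c_0,d_0$ with the prescribed orientation of $d$. This is possible because all one-holed Klein bottles with such a pair of curves are diffeomorphic, and reversing the orientation of $d$ can be compensated by a reflection of the model (this is a point to check carefully against the convention used in the definition of $u_{c,d}$). Glue $E_0$ and $D^2\times (N_g\setminus \Int\nu(c\cup d))$ along the vertical boundaries $D^2\times\Pa\nu(c\cup d)$, using the trivialization $\Psi$ from the proof of \Cref{thm:monodromy M-sing}. The resulting map $f_\xi:X_\xi\to D^2$ has a single $M$-singularity. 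By the explicit representative $\Lambda_1$ in that proof, its monodromy along $\Pa D^2$ is $\xi$, possibly up to inverse. If the orientation of the loop produces $\xi^{-1}$ instead, I would precompose with complex conjugation on the base and a suitable reflection of $M\times\C$, using $\overline{F_m}([r,s]_m,z)=F_m([r,-s]_m,\bar z)$ and $[r,-s]_m$ being well defined, to obtain the other orientation.

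\emph{Step 2: assembling the pieces.} Take points $q_1,\dots,q_n$ in the interior of $D^2$, a base point $q_0$ on the boundary, and a Hurwitz system of paths $\gamma_i$ from $q_0$ to $q_i$. Choose disjoint disks $D_i\ni q_i$ joined to $q_0$ by bands along the $\gamma_i$. Over $D^2$ minus these disks the bundle is determined by the homomorphism from $\pi_1$ of a punctured disk to $\mathcal{M}(N_g)$ sending the meridian of $\gamma_i$ to $\xi_i$. This homomorphism exists because the group is free, and the corresponding $N_g$-bundle exists by the clutching construction. Over each $D_i$ insert a copy of $f_{\xi_i}$, glued along $\Pa D_i\times N_g$ via an identification of fibers chosen so that the monodromies match. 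Since the monodromy around $q_i$ is the prescribed element $\xi_i$, any two bundle identifications over $\Pa D_i$ with the same monodromy differ by an isotopy, so the gluing can be done. Conditions (I)--(III) of \Cref{def:M-fibration} hold by construction, and the monodromy factorization is $\xi_1\cdots\xi_n$.

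\emph{Main obstacle.} The expected main difficulty is Step 1: it requires the exact identification of the local monodromy with the given $u_{c,d}$, including the orientation of $d$ and the direction of the loop. I would handle this by reading it off from the formula for $\Lambda_1$, which rotates $\beta_1^{-1}(z)$ by $\exp(\pi\sqrt{-1}\varrho'(|z|))$ exactly as in the definition of $U_{c,d}$. Any mismatch would then be fixed by composing with the orientation-reversing symmetries described above.
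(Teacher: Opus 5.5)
Your proposal is correct and follows essentially the paper's proof. The paper also builds the one-singularity piece by gluing $F_m^{-1}(\disk_1)\cap(M\times\disk_2)$ to $\disk_1\times\overline{N_g\setminus\Im\iota}$ via $\Psi$, with $\iota$ chosen so that $\iota\circ\Lambda_1\circ\iota^{-1}$ represents $\xi$, and then assembles the pieces by a boundary fiber sum, which is your Step 2. (Your $E_0$ should be cut down to $M\times\disk_2$ in the same way, since the full preimage has no vertical boundary to glue along.) Your orientation worry is moot: the rotation $\exp(\pi\sqrt{-1}\varrho'(|z|))$ in the definition of $U_{c,d}$ matches $\Lambda_1$ exactly, so $\iota$ can be taken directly from the chart $\kappa'$ in that definition.
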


\begin{proof}
It suffices to show the theorem for the case $n=1$. 
Indeed, if we can construct an $M$-fibration $f_i:X_i\to D^2$ for each $i=1,\ldots, n$ such that $f_i$ has a single critical value and the monodromy around it is $\xi_i$ (under some identification of a reference fiber of $f_i$ with $N_g$), we can obtain a desired $M$-fibration by taking a boundary fiber sum of $f_1,\ldots, f_n$. 

In the proof of \Cref{thm:monodromy M-sing}, we constructed a diffeomorphism $\Lambda_1:f^{-1}(1)\to f^{-1}(1)$, and regarded $F_m^{-1}(1)\cap (M\times \disk_2)$ as a subset of $f^{-1}(1)$. 
The restriction $\Lambda_1|_{F_m^{-1}(1)\cap (M\times \disk_2)}$ does not depend on $f$. 
By abuse of notation, we denote this restriction by the same symbol $\Lambda_1$. 

Since $\xi_1$ is a crosscap transposition, we can take an embedding $\iota:F_m^{-1}(1)\cap (M\times \disk_2)\to N_g$ so that 
$\xi_1$ is represented by the diffeomorphism such that its support is $\Im \iota$ and it sends $x\in \Im \iota$ to $\iota\circ \Lambda_1\circ \iota^{-1}(x)$. 
Let $\Psi:\disk_1\times (\C\setminus \Int(\disk_2))\to F_m^{-1}(\disk_1)\cap \left(M\times (\C\setminus \Int(\disk_2))\right)$ be the trivialization defined in the proof of \Cref{thm:monodromy M-sing}. 
We take a diffeomorphism $\eta:\Pa \disk_2\to \Pa \overline{N_g\setminus \Im \iota}$. 
Let $X$ be a manifold obtained by gluing $F_m^{-1}(\disk_1)\cap (M\times \disk_2)$ to $\disk_1\times \overline{N_g\setminus \Im \iota}$ by the following diffeomorphism:
\[
(\id_{\disk_1}\times \eta)\circ \Psi^{-1}:F_m^{-1}(\disk_1)\cap (M\times \Pa \disk_2) \to \disk_1\times \Pa \overline{N_g\setminus \Im \iota}. 
\]
Since $\Psi$ is a trivialization of $F_m$, we can define $f:X\to \disk_1$ so that $f$ is equal to $F_m$ on $F_m^{-1}(\disk_1)$ and the projection to the first component on $\disk_1\times \overline{N_g\setminus \Im \iota}$. 
The map $f$ is an $M$-fibration which has the unique critical value $0\in \disk_1$. 
In the same way as in the proof of \Cref{thm:monodromy M-sing}, we can show that the monodromy of $f$ along $\Pa \disk_1$ is $\xi_1$.
\end{proof}

\begin{corollary}\label{cor:construction M-fibration S2}

Let $\xi_i\in \mathcal{M}(N_g)$ ($i=1,\ldots,n$) be a crosscap transposition such that $\xi_1\cdots \xi_n=1$. 
There exists an $M$-fibration $f:X\to S^2$ whose monodromy factorization is $\xi_1\cdots \xi_n=1$. 

\end{corollary}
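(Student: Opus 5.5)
By \Cref{thm:construction M-fibration D2}, there is an $M$-fibration $f_0:X_0\to D^2$ whose monodromy factorization is $\xi_1\cdots\xi_n$. The plan is to cap this fibration off with a trivial bundle over a second disk. Since $\xi_1\cdots\xi_n=1$ in $\mathcal{M}(N_g)$, the monodromy of $f_0$ along $\Pa D^2$ is trivial. Choose an identification of the reference fiber over the base point $q_0\in\Pa D^2$ with $N_g$, using the same identification as in the Hurwitz system. Then the restriction $f_0|_{\Pa X_0}:\Pa X_0=f_0^{-1}(\Pa D^2)\to \Pa D^2$ is an $N_g$-bundle over $S^1$ whose monodromy is isotopic to the identity. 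Hence there is a bundle isomorphism $\tau:\Pa D^2\times N_g\to f_0^{-1}(\Pa D^2)$ covering the identity of $\Pa D^2$. This isomorphism is obtained from the trivialization $\Psi$ of $a^\ast f_0$ along the boundary loop by concatenating it with an isotopy from $\phi^{-1}\circ\psi_1$ to $\id$ near $t=1$, so that the end maps match.

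Next, set $X=X_0\cup_\tau (D^2\times N_g)$, gluing $\Pa D^2\times N_g$ to $\Pa X_0$ via $\tau$. Define $f:X\to S^2=D^2\cup_{\Pa}D^2$ by $f_0$ on $X_0$ and by the first projection on $D^2\times N_g$. These maps agree along the gluing because $\tau$ covers the identity, and smoothness near the seam follows by using collar neighborhoods. The map $f$ has no new critical points, every fiber still contains at most one critical circle, and $S^2$ has no boundary, so condition (I) is vacuous. Therefore $f$ is an $M$-fibration.

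It remains to check the monodromy factorization. The meridian loops in the first disk, taken with respect to the same Hurwitz system, give monodromies $\xi_1,\ldots,\xi_n$, and their product is the loop around the cap, which is trivial. This yields exactly the factorization $\xi_1\cdots\xi_n=1$.

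The only real subtlety is the construction of the trivialization $\tau$. One has to make sure that the vanishing of the mapping class, which is an isotopy class of diffeomorphisms fixing nothing since $N_g$ is closed, really trivializes the bundle over the circle. This holds because an $N_g$-bundle over $S^1$ is the mapping torus of its monodromy, and isotopic monodromies give isomorphic mapping tori. Under the paper's convention of multiplying mapping classes in the opposite order, the product along $\Pa D^2$ is $\xi_1\cdots\xi_n$, so the relation is used in the correct order.
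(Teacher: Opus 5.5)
Your proposal is correct and follows the paper's proof essentially step for step: build the $M$-fibration over $D^2$ via \Cref{thm:construction M-fibration D2}, use $\xi_1\cdots\xi_n=1$ to conclude that the boundary $N_g$-bundle is trivial, and cap off with $D^2\times N_g$ glued by a bundle isomorphism. Your remarks on the mapping-torus trivialization and on smoothing along the seam simply spell out details that the paper leaves implicit.
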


\begin{proof}
By \Cref{thm:construction M-fibration D2}, there exists an $M$-fibration $f':X'\to D^2$ whose monodromy factorization is $\xi_1\cdots \xi_n$. 
By the assumption, a monodromy of $f'$ along $\Pa D^2$ is trivial, and thus $f'|_{\Pa D^2}$ is a trivial $N_g$-bundle. 
We can obtain a desired $M$-fibration $f:X\to S^2$ by gluing $D^2\times N_g$ to $X'$ by an isomorphism $\Pa D^2\times N_g\to \Pa X'$ as an $N_g$-bundle.
\end{proof}

\begin{corollary}\label{cor:construction M-fibration S2 section}

Let $\widetilde{\xi}_1,\ldots, \widetilde{\xi}_n\in \mathcal{M}(N_g^b)$ be crosscap transpositions satisfying $\widetilde{\xi}_1\cdots \widetilde{\xi}_n = t_{\delta_1,\theta_1}^{a_1}\cdots t_{\delta_b,\theta_b}^{a_b}$ for some $a_1,\ldots, a_b\in \Z$. 
There exist a genus-$g$ $M$-fibration $f:X\to S^2$ and sections $\sigma_1,\ldots, \sigma_b:S^2\to X$ of it such that the corresponding monodromy factorization is $\widetilde{\xi}_1\cdots \widetilde{\xi}_n = t_{\delta_1,\theta_1}^{a_1}\cdots t_{\delta_b,\theta_b}^{a_b}$. 

\end{corollary}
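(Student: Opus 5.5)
We follow the standard construction of Lefschetz fibrations with sections, adapting the proof of \Cref{cor:construction M-fibration S2}. Fix a point $p_i$ in each boundary component of $N_g^b$ and let $N_g$ be obtained by capping the boundary components of $N_g^b$ with disks $D_1,\ldots,D_b$. Each $\widetilde{\xi}_i$ is supported in a one-holed Klein bottle in $\Int N_g^b$, so its image under the capping homomorphism is a crosscap transposition $\xi_i\in\mathcal{M}(N_g)$ with the same support. Apply \Cref{thm:construction M-fibration D2} to obtain $f':X'\to D^2$ with monodromy factorization $\xi_1\cdots\xi_n$. In that construction the local pieces are glued to products $\disk_1\times \overline{N_g\setminus \Im\iota}$, and we may choose each $\iota$ so that its image misses $D_1\cup\cdots\cup D_b$. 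Then every piece contains $D^2\times D_j$, and the boundary fiber sum preserves these products. This gives $b$ disjoint sections $\sigma'_j$ of $f'$ with trivialized neighborhoods $D^2\times D_j$.

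Next we record what happens on $\Pa D^2$. The complement $X'_0=X'\setminus \bigsqcup_j D^2\times\Int D_j$ is an $N_g^b$-fibration whose boundary monodromy is $\widetilde{\xi}_1\cdots\widetilde{\xi}_n$. This holds because the local monodromies, computed as in the proof of \Cref{thm:monodromy M-sing}, are supported away from the disks and agree with the $\widetilde{\xi}_i$ in $\mathcal{M}(N_g^b)$. By hypothesis this monodromy equals $t_{\delta_1,\theta_1}^{a_1}\cdots t_{\delta_b,\theta_b}^{a_b}$.

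We then close up. Over $\Pa D^2$ the bundle $X'_0|_{\Pa D^2}$ is the mapping torus of $\prod_j T_{\delta_j,\theta_j}^{a_j}$, with the twists supported in collars of the boundary. We glue in $D^2\times N_g^b$ (over the second hemisphere) together with, for each $j$, a neighborhood of a sphere. Concretely, we undo the twist $t_{\delta_j,\theta_j}^{a_j}$ by changing the gluing of the solid torus $\Pa D^2\times D_j$. The mapping torus of $t_{\delta_j}^{a_j}$ restricted to a collar of $\Pa D_j$, capped with the disk, is $\Pa D^2\times D_j$ reglued, so the whole boundary becomes a trivial $N_g$-bundle over $\Pa D^2$ whose trivialization restricts on each capped disk to a product structure. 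More precisely, we choose an isomorphism $\Pa D^2\times N_g\to \Pa X'$ of $N_g$-bundles that is the identity near the points $p_j$ (the centers of $D_j$) and realizes the isotopy from $\prod_j T_{\delta_j,\theta_j}^{a_j}$ to $\id$ in $\mathcal{D}(N_g)$, which rotates $D_j$ $a_j$ times. Gluing $D^2\times N_g$ by this isomorphism yields $f:X\to S^2$. The sections $\sigma'_j$ extend by $D^2\times\{p_j\}$ to sections $\sigma_j$.

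Finally we check that the monodromy factorization of $f$ and $\sigma_1,\ldots,\sigma_b$ is the given one. Removing neighborhoods of the $\sigma_j$ over the first hemisphere recovers $X'_0$, whose boundary monodromy is $\widetilde{\xi}_1\cdots\widetilde{\xi}_n$. The lifted factorization is therefore this product, and by the relation it equals $\prod t_{\delta_j,\theta_j}^{a_j}$. The exponents $a_j$ are read off from the number of rotations of the disk $D_j$ in the gluing map, so they agree with the Euler numbers up to sign, consistent with the definition.

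The main obstacle is the third step: making the gluing precise and consistent with the prescribed lift. One must verify that the isotopy from $\prod T_{\delta_j}^{a_j}$ to the identity in $\mathcal{D}(N_g)$ can be chosen fixing the points $p_j$. It can, since each twist is isotopic to the identity by rotating the capping disk about $p_j$. One must also verify that the resulting lifted monodromy is exactly $\prod t_{\delta_j,\theta_j}^{a_j}$ rather than something differing by boundary twists. This follows from the identification of the kernel of the capping homomorphism restricted to maps fixing $p_j$ with the boundary twists, which is standard and unchanged for non-orientable surfaces, since the disks $D_j$ are orientable.
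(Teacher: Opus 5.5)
Your proposal is correct and is essentially the paper's argument with the order of operations swapped: the paper first closes up the $N_g^b$-fibration over $S^2$ (the boundary twists make the boundary components circle bundles with Euler number $\pm a_i$) and then caps with $D^2$-bundles whose zero sections are the $\sigma_i$, while you carry the pieces $D^2\times D_j$ along from the start and realize the same clutching by rotating $D_j$ about $p_j$ in the gluing map. One slip: the gluing isomorphism should \emph{fix} the points $p_j$, i.e.\ send $\partial D^2\times\{p_j\}$ to $\sigma'_j(\partial D^2)$, rather than be the identity near them, since an isomorphism that is the identity near every $p_j$ exists only if $\prod_j t_{\delta_j,\theta_j}^{a_j}$ is already trivial in $\mathcal{M}(N_g^b)$.
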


\begin{proof}
In the same way as for \Cref{thm:construction M-fibration D2,cor:construction M-fibration S2}, we can construct a smooth map $f:X\to S^2$ such that $\Crit(f)$ consists of $M$-singularities and the restriction $f|_{f^{-1}(S^2\setminus \Critv(f))}$ is an $N_g^b$-bundle.
It is easy to see that the boundary component of $X$ corresponding to that of $N_g^b$ parallel to $\delta_i$ is an $S^1$-bundle over $S^2$ such that the absolute value of the Euler number is $|a_i|$. 
We obtain a desired $M$-fibration by capping each boundary component by a $D^2$-bundle over $S^2$. 
\end{proof}

\begin{remark}\label{rem:uniqueness M-fibration}

In view of the corresponding uniqueness result for Lefschetz fibrations, it is natural to expect that the isomorphism class of an $M$-fibration is determined by its monodromy factorization if its genus is at least three (cf.~\cite{Gramain1973homotopytypediffeogrp}). 
The author indeed expects that such a uniqueness statement holds. 
However, the main purpose of this paper is to introduce $M$-fibrations and to clarify the difference between $M$-fibrations and Lefschetz fibrations, as illustrated by \Cref{thm:diffeo type tildeX1}.
For this purpose, the uniqueness problem is not needed, and we leave it for future work.

Let us only point out one issue which seems to arise in proving such a uniqueness statement. 
In the Lefschetz case, one uses the fact that a Dehn twist determines the isotopy class of its underlying simple closed curve (cf.~\cite{Matsumoto1996LFgenus2}). 
Thus, in order to adapt the same argument to $M$-fibrations, one would first need to understand whether a crosscap transposition determines, in an appropriate sense, the isotopy class of the pair of curves used to define it.

\end{remark}

\section{Handle decompositions associated with $M$-fibrations and their orientation double coverings}

In this section, we study handle decompositions associated with $M$-fibrations and their orientation double coverings. 
In particular, we describe the attaching circles and framings of the $2$-handles arising from $\widetilde{M}$-singularities, which enables us to draw Kirby diagrams of the total spaces.

Let $f:X\to \disk_1$ be an $M$-fibration. 
We define a function $h:\disk_1\to \R$ by $h(z)=|z|$. 
Put $\Crit(f)=S_1\sqcup \cdots \sqcup S_n$, where $S_k$ is a connected component (i.e.,~an $M$-singularity).  
Without loss of generality, we can assume that $f(S_k) = \{\frac{1}{2}\exp(2\pi\sqrt{-1}k/n)\} =:\{q_k\}$, and there exist a diffeomorphism $\Phi_k:\nu S_k \to M\times \C$ and a complex chart $\psi_k:V_k\to \C$ around $q_k$ such that $\psi_k\circ f\circ \Phi_k^{-1} = F_m$ and $h\circ \psi_k^{-1}(z)=-\Re (z)+1/2$. 

\begin{proposition}\label{prop:Morse function M-fibration}

The restriction of $h\circ f$ to $X\setminus f^{-1}(0)$ is a Morse function such that the critical point set $\Crit(h\circ f)$ is $\{\Phi_k^{-1}\left([0,1/2]_m,0\right)\in X ~|~ k=1,\ldots, n\}$ and the index of each critical point is $2$. 

\end{proposition}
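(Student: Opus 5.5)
Away from $\Crit(f)$ and from $f^{-1}(0)$ the map $f$ is a submersion and $h$ has no critical points on $\disk_1\setminus\{0\}$. So the critical points of $h\circ f$ in $X\setminus f^{-1}(0)$ lie in $\Crit(f)=S_1\sqcup\cdots\sqcup S_n$, and the whole question is local near each $S_k$. Concretely, at a point $x$ with $f$ submersive and $f(x)\neq 0$, we have $d(h\circ f)_x=dh_{f(x)}\circ df_x\neq 0$, because $dh_{f(x)}\neq 0$ (indeed $h(z)=|z|$ is smooth with nonvanishing differential away from $0$) and $df_x$ is surjective.

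Near $S_k$ we pass to the model via $\Phi_k$ and $\psi_k$. Since $\psi_k\circ f\circ\Phi_k^{-1}=F_m$ and $h\circ\psi_k^{-1}(z)=-\Re(z)+1/2$, on $\nu S_k$ we have
\[
h\circ f\circ\Phi_k^{-1}([r,s]_m,z)=\tfrac12-\Re\bigl(r\exp(\pi\sqrt{-1}s)+z^2\bigr)=\tfrac12-r\cos(\pi s)-(x^2-y^2),
\]
with $z=x+\sqrt{-1}y$. In the local coordinates $(r,s,x,y)$ the differential is
\[
\bigl(-\cos(\pi s),\ \pi r\sin(\pi s),\ -2x,\ 2y\bigr).
\]
It vanishes exactly when $x=y=0$, $\cos(\pi s)=0$ and $r\sin(\pi s)=0$, i.e.\ $x=y=0$, $r=0$ and $s\in\tfrac12+\Z$. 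Modulo the action $\mathfrak m$, which identifies $s$ with $s+1$, there is exactly one such point, namely $[0,1/2]_m$ (with $z=0$), since $[0,3/2]_m=[0,1/2]_m$. This gives $\Crit(h\circ f)=\{\Phi_k^{-1}([0,1/2]_m,0)\}_k$. These points lie in $f^{-1}(q_k)$ and $q_k\neq 0$, so they belong to $X\setminus f^{-1}(0)$. One small point to note: we only need $\psi_k$ to realize this formula on a neighborhood $V_k$ of $q_k$ avoiding $0$, and the preimage of $V_k$ near $S_k$ is covered by $\nu S_k$.

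For nondegeneracy and index, compute the Hessian at $r=0$, $s=1/2$, $x=y=0$. The mixed term $\partial_r\partial_s$ equals $\pi\sin(\pi s)=\pi$, while $\partial_s^2=\pi^2 r\cos(\pi s)=0$ and $\partial_r^2=0$. So the $(r,s)$ block is $\begin{pmatrix}0&\pi\\ \pi&0\end{pmatrix}$, which has signature $(1,1)$. The $(x,y)$ block is $\mathrm{diag}(-2,2)$, also of signature $(1,1)$. The Hessian is therefore nondegenerate with exactly two negative eigenvalues, so the index is $2$. The coordinates $(r,s)$ are genuine local coordinates near $[0,1/2]_m$ since $\pi_m$ is a covering, so the computation is legitimate.

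No step is a real obstacle. The only care needed is the orbit bookkeeping under $\mathfrak m$: the points $s=\pm1/2$ must be identified, so that each $S_k$ contributes exactly one critical point rather than two.
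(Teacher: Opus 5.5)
Your proof is correct and follows essentially the same route as the paper: reduce to $\Crit(f)$ because $f$ is a submersion elsewhere, write $h\circ f\circ\Phi_k^{-1}=-r\cos(\pi s)-x^2+y^2+1/2$, locate the unique zero of the gradient at $([0,1/2]_m,0)$, and read off index $2$ from the block Hessian. Your extra remarks, that $dh\neq 0$ away from $0$ and that $[0,1/2]_m=[0,3/2]_m$ under $\mathfrak{m}$, only make explicit what the paper leaves implicit.
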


\begin{proof}
Since $f$ is a submersion on $X\setminus (S_1\sqcup \cdots \sqcup S_n)$, $\Crit(h\circ f)$ is contained in $S_1\sqcup \cdots \sqcup S_n$. 
The following holds under the local coordinates $(r,s,x,y)$ around a point in $\nu S_k$:
\begin{equation}\label{eqn:local description hf}
h\circ f\circ \Phi_k^{-1}(r,s,x,y) = -r \cos(\pi s)-x^2+y^2+1/2. 
\end{equation}
The gradient of the above function is $(-\cos(\pi s),\pi r\sin(\pi s),-2x,2y)$, and therefore $\nu S_k$ has a unique critical point $\Phi_k^{-1}([0,1/2]_m,0)$ of $h\circ f$. 
The Hessian matrix of the above function at this critical point is $\left(\begin{array}{c|c}
\begin{smallmatrix}
0 & \pi\\
\pi&0
\end{smallmatrix}& 0\\ \hline
0&\begin{smallmatrix}
-2&0\\
0&2
\end{smallmatrix}
\end{array}\right)$, which is non-degenerate and has two negative eigenvalues.
\end{proof}

\begin{corollary}\label{cor:handle decomposition M-fibration}

Let $\epsilon>0$ be sufficiently small real number. 
The total space $X$ of a genus-$g$ $M$-fibration $f:X\to \disk_1$ is obtained by gluing $n$ $2$-handles to $f^{-1}(\disk_{1/2-\epsilon})$ along the following attaching circles ($k=1,\ldots, n$): 
\[
\Phi_k^{-1}\left(\left\{([\cos(\pi s),s]_m,x)\in M\times \C~|~s,x\in \R, x^2+\cos^2(\pi s)=\epsilon\right\}\right)\subset f^{-1}(\disk_{1/2-\epsilon}). 
\]
In particular, $X$ admits a handle decomposition with one $0$-handle, $g$ $1$-handles, and $1+n$ $2$-handles. 

\end{corollary}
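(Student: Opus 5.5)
The plan is to read off the handle decomposition from the Morse function $h\circ f$ of \Cref{prop:Morse function M-fibration}. Two ingredients are needed: a description of $f^{-1}(\disk_{1/2-\epsilon})$, and a computation of the descending manifolds at the $n$ critical points.

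\emph{The sublevel set.} Since $\Critv(f)=\{q_1,\ldots,q_n\}$ lies on the circle of radius $1/2$, the preimage $f^{-1}(\disk_{1/2-\epsilon})$ is a trivial $N_g$-bundle over a disk, hence diffeomorphic to $D^2\times N_g$. This admits a handle decomposition with one $0$-handle, $g$ $1$-handles and one $2$-handle: take a handle decomposition of $N_g$ (one $0$-handle, $g$ $1$-handles, one $2$-handle) and thicken it by $D^2$.

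\emph{Passing the critical levels.} As $h\circ f$ increases from $1/2-\epsilon$ to $1$, it has exactly the critical points $p_k=\Phi_k^{-1}([0,1/2]_m,0)$, each of index $2$ by \Cref{prop:Morse function M-fibration}. Away from $f^{-1}(\{q_1,\ldots,q_n\})$, the function $h\circ f$ has no critical points. Moreover, the region $f^{-1}(\disk_1\setminus\Int\disk_{1/2-\epsilon})$ minus small neighborhoods of the $p_k$ deformation retracts (via gradient flow) onto its lower boundary together with the handles. By standard Morse theory, $X$ is therefore obtained from $f^{-1}(\disk_{1/2-\epsilon})$ by attaching $n$ $2$-handles, and there are no higher handles since $\Pa X=f^{-1}(\Pa\disk_1)$ is the top level.

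\emph{The attaching circles.} These are the intersections of the descending manifolds with the level $\{h\circ f=1/2-\epsilon\}$, and I would compute them directly in the local coordinates of \eqref{eqn:local description hf}. In the chart, the level $1/2-\epsilon$ corresponds to $\Re(w)=\epsilon$ near $w=0$. I will not use the gradient flow of an unspecified metric. Instead I will choose the metric near $\nu S_k$ (as in the proof of \Cref{thm:monodromy M-sing}) so that the descending disk of $p_k$ is the explicit $2$-disk
\[
\{([r,s]_m,x): r=t\cos(\pi s),\ x\in\R\}
\]
near $s=1/2$. Along this surface $y=0$ and $\Im F_m=r\sin(\pi s)=0$ forces $r\sin(\pi s)$ to vanish, so I will instead parametrize a disk on which $h\circ f=1/2-r\cos(\pi s)-x^2$ decreases from $1/2$. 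On the set $\{y=0,\ r=\lambda\cos(\pi s)\}$, the value of $h\circ f$ is $1/2-\lambda\cos^2(\pi s)-x^2$. With $\lambda=1$, the level $1/2-\epsilon$ is exactly the stated circle $x^2+\cos^2(\pi s)=\epsilon$, and this set is an embedded disk centered at $p_k$ (near $s=1/2$, $\cos(\pi s)$ serves as a coordinate).

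The main obstacle is justifying that this disk is a descending disk for some gradient-like vector field. I would handle it by constructing a gradient-like vector field for $h\circ f$ tangent to this disk and agreeing with a genuine gradient outside $\nu S_k$. This works because $h\circ f$ strictly decreases radially on the disk and the disk is transverse to the ascending directions given by the positive eigenvectors of the Hessian. Finally, the handle count is $1+g+(1+n)$ as claimed.
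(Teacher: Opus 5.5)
Your proposal is correct and follows the route the paper leaves implicit, since the corollary is stated without proof as a direct consequence of \Cref{prop:Morse function M-fibration}: $f^{-1}(\disk_{1/2-\epsilon})\cong D^2\times N_g$ supplies the $0$-handle, the $g$ $1$-handles and one $2$-handle, and each $p_k$ contributes a further $2$-handle whose descending disk is $\{y=0,\ r=\cos(\pi s)\}$, which meets the level $1/2-\epsilon$ in the stated circle. The gradient-like-field step becomes immediate in the local coordinates $u=(\cos(\pi s)+r)/2$, $v=(\cos(\pi s)-r)/2$, where $h\circ f=1/2-u^2-x^2+v^2+y^2$ and your disk is exactly $\{v=y=0\}$, so the Euclidean metric in $(u,v,x,y)$ works; also delete the abandoned first attempt with $r=t\cos(\pi s)$ and $\Im F_m=0$.
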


Let $\mathcal{P}:\widetilde{X}\to X$ be the orientation double covering. 
Then $\widetilde{f}=f\circ \mathcal{P}$ is an $\widetilde{M}$-fibration such that $\Crit(\widetilde{f})$ is $\widetilde{S}_1\sqcup \cdots \sqcup \widetilde{S}_n$, where $\widetilde{S}_k = \mathcal{P}^{-1}(S_k)$.
Furthermore, by \Cref{prop:Morse function M-fibration}, we obtain:

\begin{corollary}\label{cor:handle decomposition tildeM-fibration}

The manifold $\widetilde{X}$ is obtained by gluing $2n$ $2$-handles to $\widetilde{f}^{-1}(\disk_{1/2-\epsilon})$ along the following attaching circles ($k=1,\ldots, n$): 
\begin{align*}
C_1=&\widetilde{\Phi}_k^{-1}\left(\left\{([\cos(2\pi s),s]_a,x)\in A\times \C~|~x\in \R, s\in (0,1/2), x^2+\cos^2(2\pi s)=\epsilon\right\}\right),\mbox{ and}\\
C_2=&\widetilde{\Phi}_k^{-1}\left(\left\{([\cos(2\pi s),s]_a,x)\in A\times \C~|~x\in \R, s\in (1/2,1), x^2+\cos^2(2\pi s)=\epsilon\right\}\right),
\end{align*}
where $\widetilde{\Phi}_k:\nu \widetilde{S}_k\to A\times \C$ is a lift of $\Phi\circ \mathcal{P}$ by $(\pi\times \id_{\C}):A\times \C\to M\times \C$ defined at the beginning of \Cref{sec:monodromy M-fibration}.

\end{corollary}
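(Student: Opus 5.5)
The plan is to pull the Morse function $h\circ f$ of \Cref{prop:Morse function M-fibration} back along the double cover $\mathcal{P}$ and read off the handles. We set $\widetilde{h}=h\circ\widetilde{f}=h\circ f\circ\mathcal{P}$ on $\widetilde{X}\setminus\widetilde{f}^{-1}(0)$. Since $\mathcal{P}$ is a local diffeomorphism, $\widetilde{h}$ is Morse. Its critical points are exactly the preimages of the critical points of $h\circ f$, and each has index $2$. Each critical point $\Phi_k^{-1}([0,1/2]_m,0)$ has two preimages. In the coordinate $\widetilde{\Phi}_k$ these are $([0,1/4]_a,0)$ and $([0,3/4]_a,0)$, because $\pi([r,s]_a)=[r,2s]_m$ and $[0,1/2]_m$ has preimages $s=1/4,3/4$ mod $1$. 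Explicitly, $\widetilde{h}\circ\widetilde{\Phi}_k^{-1}=-r\cos(2\pi s)-x^2+y^2+1/2$, and one checks directly that the critical points occur where $r=0$, $\cos(2\pi s)=0$, $x=y=0$, with nondegenerate Hessian of index $2$.

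Next we apply standard Morse theory. Because $h\circ\widetilde{f}$ has no critical points on $\widetilde{f}^{-1}(\disk_1\setminus\Int\disk_{1/2+\epsilon})$ other than these, and none on $\widetilde{f}^{-1}(\disk_{1/2-\epsilon})\setminus\widetilde{f}^{-1}(0)$ except at level $1/2$, the manifold $\widetilde{X}$ is obtained from $\widetilde{f}^{-1}(\disk_{1/2-\epsilon})$ by attaching one $2$-handle for each critical point, giving $2n$ handles in total. Here we follow the same argument as in \Cref{cor:handle decomposition M-fibration}, where the outer region beyond level $1/2+\epsilon$ is a collar.

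The attaching circles come from lifting those in \Cref{cor:handle decomposition M-fibration}. The attaching circle there is the boundary of the descending disk, which is a neighborhood of the negative directions of the Hessian; in the $(r,s)$ plane and in $x$ the descending manifold is traced by $r=\cos(\pi s)$. The cleanest route is to compute the descending manifolds directly in $A\times\C$ with the lifted local function. The descending set of the critical point at $s=1/4$ is $\{y=0,\ r=\cos(2\pi s)\}$ intersected with the portion $s\in(0,1/2)$. On that portion $\widetilde{h}=-\cos^2(2\pi s)-x^2+1/2$, and its boundary at level $1/2-\epsilon$ is $\{x^2+\cos^2(2\pi s)=\epsilon\}$ with $s\in(0,1/2)$, which is $C_1$. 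The same computation near $s=3/4$ gives $C_2$.

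Alternatively, the circle in \Cref{cor:handle decomposition M-fibration} lies over $s$ near $1/2$ mod $2$. Its preimage under $\pi\times\id$ is two circles, since the descending disk is simply connected and so lifts to two disjoint copies. These are distinguished by whether $2s$ lies near $1/2$ or near $3/2$ mod $2$, which is exactly the split into $s\in(0,1/2)$ and $s\in(1/2,1)$.

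The main subtlety is justifying that this "descending disk boundary" is the attaching circle, i.e. that the gradient-like flow can be chosen to be standard in the chart. This holds as in \Cref{cor:handle decomposition M-fibration}, using a metric that is Euclidean in $(r,s,x,y)$ near the critical points. We also need the small check that the curves $C_1$ and $C_2$ are disjoint circles lying in the level set, which follows because $\cos(2\pi s)=\pm\sqrt{\epsilon-x^2}$ forces $s$ to stay near $1/4$ or near $3/4$ for small $\epsilon$.
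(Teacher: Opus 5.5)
Your overall route is the paper's. There the statement is read off directly from \Cref{prop:Morse function M-fibration} by pulling $h\circ f$ back along $\mathcal{P}$. Each index-$2$ critical point $\Phi_k^{-1}([0,1/2]_m,0)$ has the two preimages $\widetilde{\Phi}_k^{-1}([0,1/4]_a,0)$ and $\widetilde{\Phi}_k^{-1}([0,3/4]_a,0)$. The attaching circle of \Cref{cor:handle decomposition M-fibration} has preimage $\{[\cos(2\pi s),s]_a\}$ under $\pi\times\id_{\C}$, with $s$ near $1/4$ or near $3/4$; these are the two circles $C_1,C_2$. Your ``alternatively'' paragraph is exactly this lifting argument, and given \Cref{cor:handle decomposition M-fibration} it suffices on its own. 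One sentence needs fixing: the condition you need is that the only critical points of $h\circ\widetilde f$ in $\widetilde f^{-1}(\disk_1\setminus\Int\disk_{1/2-\epsilon})$ are the $2n$ points at level $1/2$.

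The justification you single out as the main subtlety is false as stated. The disk $D=\{y=0,\ r=\cos(2\pi s)\}$ is not the descending manifold for a metric that is Euclidean in $(r,s,x,y)$. At $p=([0,1/4]_a,0)$, the Hessian of $-r\cos(2\pi s)-x^2+y^2$ has negative eigenspace spanned by $\Pa_r-\Pa_s$ and $\Pa_x$. By contrast, $T_pD$ is spanned by $-2\pi\Pa_r+\Pa_s$ and $\Pa_x$, so $D$ is not even tangent to the Euclidean descending disk.

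What is true, and enough, is the following. The restriction $\widetilde h|_D=1/2-\cos^2(2\pi s)-x^2$ has a unique nondegenerate maximum at $p$, so the Hessian of $\widetilde h$ is negative definite on $T_pD$. Take a Morse chart in which $\widetilde h=1/2-|a|^2+|b|^2$. Since $T_pD$ meets the positive-definite plane $\{a=0\}$ trivially, $D$ is locally a graph $b=\phi(a)$ with $|\phi(a)|<|a|$, and the nondegeneracy makes this bound uniform. For small $\epsilon$, the disks $b=t\phi(a)$, $t\in[0,1]$, then meet $\widetilde h^{-1}(1/2-\epsilon)$ in circles. These give an isotopy inside that level set from the genuine attaching circle $\{b=0,\ |a|^2=\epsilon\}$ to $C_i$. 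Attaching circles only matter up to such isotopy, so this proves the claim. Alternatively, you could choose a non-Euclidean gradient-like field that is tangent to $D$ instead of the Euclidean one.
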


\noindent
In what follows, we will describe configurations and framings of the $2$-handles in $\nu \widetilde{S}_k$ (in order to draw a Kirby diagram of $\widetilde{X}$). 
Applying a suitable scaling transformation, it is enough to consider the $2$-handles associated with the Morse function $\omega := (-\Re)\circ F_a:A\times \C\to \R$ attached to $\omega^{-1}(-1/2)$. 

Let $\widetilde{\alpha}_w:F_a^{-1}(w)\to A$ be the restriction of the projection to the first component. 
When the value $w$ is clear from the context, we write $\widetilde{\alpha}$ instead of $\widetilde{\alpha}_w$ for simplicity. 
As observed in \Cref{rem:branched cover alpha}, for $w\neq 0$, $\widetilde{\alpha}_w$ is a double branched covering branched at the two point $[r_w,s_w]_a,[-r_w,s_w+1/2]_a\in A$, where $r_w>0$ and $s_w\in \R$ satisfy $r_w \exp(2\pi\sqrt{-1}s_w)=w$ (or equivalently, $([r_w,s_w]_a,0)\in F_a^{-1}(w)$).

We first construct a vector field on $\omega^{-1}(-1/2)$ which moves the attaching circles $C_1$ and $C_2$ into $F_a^{-1}(1/2)$.
We put $\eta := \Im \circ F_a$. 
On the open subset $U_1 :=\{([r,s]_a,z)\in \omega^{-1}(-1/2)~|~z\neq 0\}\subset \omega^{-1}(-1/2)$, define a vector field $V_z$ by
\[
V_z:=\frac{y}{2(x^2+y^2)}\partial_x+\frac{x}{2(x^2+y^2)}\partial_y.
\]
By the direct calculation, we can check that $V_z(\omega)=0$ and $V_z(\eta)=1$. 
We also define a vector field $V_r$ on a small neighborhood $U_2$ of $\omega^{-1}(-1/2)\cap (A\times \{0\})$ as follows:
\[
V_r
:=
\sin(2\pi s)\,\partial_r+\frac{\cos(2\pi s)}{2\pi r}\,\partial_s.
\]
This vector field also satisfies $V_r(\omega)=0$ and $V_r(\eta)=1$.
Let $\{\rho_z,\rho_r\}$ be a partition of unity on $\omega^{-1}(-1/2)$ subordinate to the open cover $\{U_1,U_2\}$. Then the vector field $V:=\rho_zV_z+\rho_rV_r$ on $\omega^{-1}(-1/2)$ satisfies $V(\omega)=0$ and $V(\eta)=1$. 

We next examine flows of $V_z,V_r$ and $V$. 
First observe that $V_z$ has no $\partial_r$- or $\partial_s$-component. Hence, for any point $p\in \omega^{-1}(-1/2)$, the projection $\widetilde{\alpha}(p)$ is unchanged along the flow of $V_z$. 
In particular, if $\widetilde{\alpha}(p)$ lies outside $\widetilde{\alpha}(U_2)$, then the flow line of $V$ starting at $p$ coincides with that of $V_z$, and thus $\widetilde{\alpha}$ is constant along such a flow line.

By construction, $\widetilde{\alpha}(U_2)$ is contained in a neighborhood of
\[
B_1\sqcup B_2:=\{[r_{1/2+\sqrt{-1}u},s_{1/2+\sqrt{-1}u}]_a\in A \mid u\in \R\}\sqcup \{[-r_{1/2+\sqrt{-1}u},s_{1/2+\sqrt{-1}u}+1/2]_a\in A \mid u\in \R\}.
\]
On this neighborhood, the map
\[
\Gamma:A\to \C,\qquad [r,s]_a\mapsto r\exp(2\pi \sqrt{-1}s)
\]
is a diffeomorphism onto its image.
Since $V_r$ depends only on $[r,s]_a$, it naturally induces a vector field on $A$. 
This vector field corresponds to $\partial_y$ on $\C$ via $\Gamma$. 
Therefore, for a point $p\in U_2$, the image $\Gamma\circ \widetilde{\alpha}(\phi_t^{V_r}(p))$ moves only in the imaginary direction, i.e., its real part remains constant while its imaginary part increases linearly in $t$, where $\phi_t^{V_r}:A\to A$ is the flow of $V_r$.
Finally, since $V=\rho_z V_z+\rho_r V_r$, it follows that for any point $p\in U_2$, the image $\Gamma\circ \widetilde{\alpha}(\phi_t^{V}(p))$ also moves only in the imaginary direction. Indeed, the contribution of $V_z$ does not change $\widetilde{\alpha}$, while the contribution of $V_r$ changes only the imaginary part under $\Gamma$.

We now define an isotopy $H_t^i:C_i\to \omega^{-1}(-1/2)$ of $C_i$ by $H_t^i(p):=\phi_{-t\,\eta(p)}^V(p)$.
Since $V(\eta)=1$, $\eta(H_t^i(p))$ is equal to $(1-t)\eta(p)$ for all $p\in C_i$ and $t\in [0,1]$, and hence $H_1^i(C_i)\subset F_a^{-1}(1/2)$.

By \Cref{cor:handle decomposition tildeM-fibration}, each of the two attaching circles $C_1,C_2$ does not intersect $F_a^{-1}(1/2+\sqrt{-1}u)$ if $|u|>1/2$, and intersects $F_a^{-1}(1/2+\sqrt{-1}u)$ at two points (resp.~a single point) if $|u|<1/2$ (resp.~$|u|=1/2$). 
In the case $|u|<1/2$, the two intersection points are sent to the same point by $\widetilde{\alpha}$. 
As $u$ varies, the images of these intersection points under $\widetilde{\alpha}$ trace out curves, which are shown as thick solid curves in \Cref{fig:graph branch attcircle}. 
On the other hand, the curves $B_1$ and $B_2$ are depicted in the same figure as dashed curves. 
Note that both the solid and dashed curves are oriented in the direction of increasing $u$. 
As discussed above, for points in $U_2$, the flow of $V$ changes only the imaginary part under the map $\Gamma\circ \widetilde{\alpha}$. 
Thus, the images of such points under $\widetilde{\alpha}$ are dragged along this dashed curve by the flow of $V$.
As a consequence, the images of the curves $H_1^1(C_1), H_1^2(C_2)\subset F_a^{-1}(1/2)$ under $\widetilde{\alpha}:F_a^{-1}(1/2)\to A$ are as shown in \Cref{fig:attcircle annulus1}. 
Moving the branch point on the left-hand side of \Cref{fig:attcircle annulus1} to the right along the thin arrow yields \Cref{fig:attcircle annulus2}. 
A genus-one surface given as a double branched covering at the two points in the same figure is illustrated in \Cref{fig:attcircle genus1surface}. 
\begin{figure}[htbp]
\subfigure[Dashed curves are $B_1\sqcup B_2$, while solid curves are the loci of images of the intersections between the attaching circles and $F_a^{-1}(1/2+\sqrt{-1}u)$ under $\widetilde{\alpha}$ as $u$ varies. ]{\includegraphics{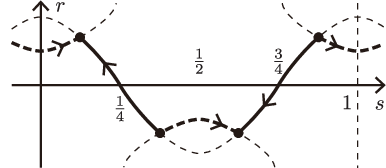}\label{fig:graph branch attcircle}}\hspace{10pt}%
\subfigure[The images of the isotoped attaching circles $H_1^1(C_1)$ and $H_1^2(C_2)$ under $\widetilde{\alpha}$.]{\includegraphics{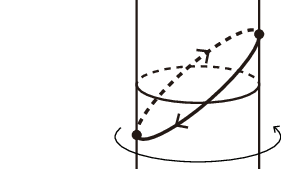}\label{fig:attcircle annulus1}}

\subfigure[This figure is obtained by moving a branch point in \subref{fig:attcircle annulus1} along the thin arrow.]{\includegraphics{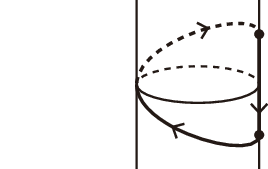}\label{fig:attcircle annulus2}}\hspace{10pt}%
\subfigure[A genus-one surface given as a double branched covering at the two points in \subref{fig:attcircle annulus2}.]{\includegraphics{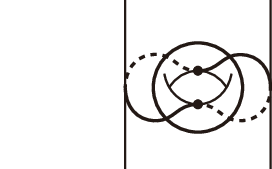}\label{fig:attcircle genus1surface}}
\caption{Configurations of the two attaching circles and the two branch points.}
\label{fig:configuration branch attcircle}
\end{figure}

The curves $H_1^1(C_1)$ and $H_1^2(C_2)$ have the fiber framings, i.e.,~the framings that are contained in $TF_a^{-1}(1/2)$.
Since $V(\eta)=1$, the restriction $V|_{H_1^i(C_i)}$ is transverse to the fiber $F_a^{-1}(1/2)$, and hence induces the fiber framing.

\begin{lemma}\label{lem:independence}
Let $C_{i,t}:=H_t^i(C_i)$. 
Then for any $t\in[0,1]$ and $p\in C_{i,t}$, the direction vector of $C_{i,t}$ at $p$ and $V(p)$ are linearly independent.
\end{lemma}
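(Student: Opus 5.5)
Since $V(\eta)=1$, the vector $V(p)$ is transverse to the level set $\eta^{-1}(\eta(p))\cap\omega^{-1}(-1/2)$, that is, to the fiber $F_a^{-1}(-1/2\cdot(-1)+\sqrt{-1}\eta(p))$ of $F_a$ passing through $p$ (note that on $\omega^{-1}(-1/2)$ we have $\Re F_a=1/2$, so the pair $(\omega,\eta)$ restricted there is just $\Im F_a$). Hence it suffices to show that the tangent vector of $C_{i,t}$ at $p$ is not a multiple of $V(p)$. I will compute $d\eta$ along $C_{i,t}$ and exploit the fact that the flow $\phi^V$ shifts $\eta$ uniformly.

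First treat $t=0$. On $C_i$ we have $z=x\in\R$, so $F_a=\cos(2\pi s)e^{2\pi\sqrt{-1}s}+x^2$ and, on the constraint $x^2+\cos^2(2\pi s)=\epsilon$ (after rescaling to the level $\omega=-1/2$), one gets $\eta|_{C_i}=\cos(2\pi s)\sin(2\pi s)=\tfrac12\sin(4\pi s)$. Parametrize $C_i$ by a parameter $\tau$ and write $c(\tau)$. Then $C_{i,t}$ is parametrized by $\tau\mapsto\phi^V_{-t\eta(c(\tau))}(c(\tau))$, and its velocity is
\[
d\phi^V_{-t\eta(c)}\bigl(c'(\tau)\bigr)-t\,\tfrac{d}{d\tau}\eta(c(\tau))\,V\bigl(\phi^V_{-t\eta(c)}(c)\bigr).
\]
Since $\phi^V$ is the flow of $V$, $d\phi^V_\sigma$ maps $V$ to $V$, so this velocity is a multiple of $V$ if and only if $c'(\tau)$ is a multiple of $V(c(\tau))$. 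Thus the lemma reduces to the case $t=0$: it suffices to check that $c'(\tau)$ and $V(c(\tau))$ are linearly independent at every point of $C_i$.

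For $t=0$, suppose $c'=\lambda V$. Then $\lambda=d\eta(c')=\tfrac{d}{d\tau}\eta(c)$. At the points where $\tfrac{d}{d\tau}\eta(c)=0$ this forces $c'=0$, which is impossible since $C_i$ is an embedded circle. At the other points, I compare components. On $C_i$ we have $y=0$, so $V_z=\frac{1}{2x}\partial_y$ (when $x\neq0$) is purely in the $\partial_y$ direction, whereas $c'$ has zero $\partial_y$-component and nonzero $(\partial_r,\partial_s,\partial_x)$-components. Where $\rho_z=1$, independence is therefore immediate. Where $\rho_r>0$, which happens near $x=0$, I will use instead that the $\partial_r,\partial_s$ components of $V_r$ are $(\sin 2\pi s,\ \cos(2\pi s)/(2\pi r))$. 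Along $C_i$ we have $r=\cos(2\pi s)$, and near $x=0$ the point $\cos^2(2\pi s)=\epsilon$ has $r\neq0$ and $s$ determined. So $c'$ is tangent to the graph $r=\cos 2\pi s$ in the $(r,s)$-plane plus an $x$-component, while $V$ has an $x$-component of zero and a $\partial_y$-component $\rho_z\cdot\frac{1}{2x}$ (which vanishes exactly when $\rho_z=0$). I would choose $U_2$ small enough that $\rho_z=0$ is only possible for $|x|$ tiny; there $c'$ is dominated by its $\partial_x$-component, because $dx/d\tau\neq0$ near $x=0$ on the circle $x^2+\cos^2=\epsilon$, while $V$ has no $\partial_x$-component at all. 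Hence they are independent. In the overlap region, the $\partial_y$-component of $V$ is nonzero and that of $c'$ is zero, unless $\lambda=0$, which was excluded above.

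The main obstacle is this bookkeeping in the overlap of $U_1$ and $U_2$. The cleanest way to handle it is to observe that $V$ never has a $\partial_x$-component while $c'$ has none in $\partial_y$. Consequently $c'=\lambda V$ forces $c'$ to lie in the $(r,s)$-directions and $V$ to have vanishing $\partial_y$-component, i.e.\ $\rho_z=0$. This confines $p$ to where $x$ is near $0$, and there $dx/d\tau\neq0$ gives a contradiction. I must ensure that the cover is chosen so that $U_2\setminus U_1$ only meets $C_i$ near $x=0$, which is possible since $U_2$ is a small neighborhood of $A\times\{0\}$.
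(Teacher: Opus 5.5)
Your proof is correct and is essentially the paper's argument. It uses the same reduction to $t=0$ via $d\phi^V_\sigma(V)=V$, followed by a componentwise check along $C_i$, where $y=0$ gives $V=\rho_r V_r+\frac{\rho_z}{2x}\partial_y$, exactly as in the paper's computation of $V(y)$.

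The only difference is on $\{\rho_z=0\}$. There the paper uses $V(r-\cos 2\pi s)=2\rho_r\sin(2\pi s)\neq 0$, while you use the nonzero $\partial_x$-component of $c'$.

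Your extra shrinking of $U_2$ is unnecessary. On $C_i$ one has $dx/d\tau=0$ only where $r=\cos(2\pi s)=0$, and $V_r$ is not defined there, so such points never lie in $U_2$.
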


\begin{proof}
Let $v = \frac{dc}{ds}|_{s=0} \in T_pC_i$. 
For $t\in [0,1]$, the differential $(dH_t^i)_p(v)$ is calculated as follows: 
{\allowdisplaybreaks
\begin{align*}
(dH_t^i)_p(v) =& \left.\frac{d}{ds}\left(\phi_{-t \eta(c(s))}^V(c(s))\right)\right|_{s=0}\\
=& -t\left.\frac{d}{ds}\left(\eta(c(s))\right)\right|_{s=0}\left.\frac{d}{ds}\left(\phi_{s}^V(p)\right)\right|_{s=-t\eta(p)}+ \left.\frac{d}{ds}\left(\phi_{-t\eta(p)}^V(c(s))\right)\right|_{s=0}\\
=& \left(d\phi_{-t\eta(p)}^V\right)_p\left(v-tv(\eta)V(p)\right).
\end{align*}
}%
Since $(d\phi_{-t\eta(p)}^V)_p$ is an isomorphism and sends $V(p)$ to $V(\phi_{-t\eta(p)}^V(p))$, the linear independence of a direction vector of $C_{i,t}$ at $\phi_{-t\eta(p)}^V(p)$ and $V(\phi_{-t\eta(p)}^V(p))$ is equivalent to that of $v$ and $V(p)$.
Thus it suffices to show that $V$ is nowhere tangent to $C_i$. 

By direct calculation, we obtain
\[
V(r-\cos(2\pi s))=2\rho_r \sin(2\pi s), \quad V(y) = \frac{x\rho_z}{2(x^2+y^2)}. 
\]
Since $\sin (2\pi s)\neq 0$ at any point in $C_i$, $V$ is not tangent to $C_i$ on $\operatorname{supp}(\rho_r)$. 
On the other hand, since $y=0$ at any point in $C_i$, $V$ is not tangent to $C_i$ on $\operatorname{supp}(\rho_z)$. 
\end{proof}

We can deduce from this lemma that the framed curve $(C_i,V|_{C_i})$ is isotopic to $H_1^i(C_i)$ with the fiber framing.

\begin{lemma}\label{lem:framing comparison}
The framing $V|_{C_i}$ is homotopic to that given by $\partial_y$.
\end{lemma}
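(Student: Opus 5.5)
The plan is to compare the two framings through an explicit trivialization of the normal bundle of $C_i$ inside the $3$-manifold $\omega^{-1}(-1/2)$. Once both framings are expressed as maps $C_i\to\R^2\setminus\{0\}$ in that trivialization, it suffices to show they are homotopic as such maps.

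\textbf{Step 1: both fields are framings.} First check that $\partial_y$ is a legitimate framing along $C_i$.
\begin{itemize}
\item Since $\omega=-(r\cos(2\pi s)+x^2-y^2)$, we have $\partial_y\omega=2y$. This vanishes on $C_i$, because $y=0$ there, so $\partial_y$ is tangent to $\omega^{-1}(-1/2)$ along $C_i$.
\item We have $\partial_y(y)=1$, so $\partial_y$ is transverse to $C_i$.
\item The field $V$ satisfies $V(\omega)=0$, and it is transverse to $C_i$ by the proof of \Cref{lem:independence}.
\end{itemize}

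\textbf{Step 2: a normal trivialization.} Inside $\omega^{-1}(-1/2)$, near $C_i$, the curve $C_i$ is cut out by the two equations
\[
y=0,\qquad g:=r-\cos(2\pi s)=0.
\]
On $C_i$ itself, the level condition then reduces to $x^2+\cos^2(2\pi s)=\epsilon$. I would check that $dy$ and $dg$ are linearly independent on $T\omega^{-1}(-1/2)|_{C_i}$; this is essentially the statement that $C_i$ is a regular transverse intersection. Granting this, the map
\[
W\longmapsto \bigl(W(y),W(g)\bigr)
\]
identifies the normal bundle of $C_i$ in $\omega^{-1}(-1/2)$ with the trivial bundle $C_i\times\R^2$. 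Under this identification, a vector field along $C_i$ is a framing exactly when the resulting map $C_i\to\R^2$ never vanishes. Homotopy classes of framings then correspond to homotopy classes of maps $C_i\to\R^2\setminus\{0\}$.

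\textbf{Step 3: compute both maps.} The framing $\partial_y$ gives the constant map $(1,0)$. For $V$, the computation in the proof of \Cref{lem:independence} gives
\[
\bigl(V(y),V(g)\bigr)=\left(\frac{\rho_z}{2x},\; 2\rho_r\sin(2\pi s)\right)\quad\text{on } C_i.
\]
Here $y=0$, and $\rho_z$ vanishes where $x=0$: at such points $z=0$, so the point lies outside $U_1$.

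\textbf{Step 4: the key observation.} On $C_1$ we have $s\in(0,1/2)$, so $\sin(2\pi s)>0$ and the second coordinate is $\ge 0$. Whenever the second coordinate vanishes we have $\rho_r=0$, hence $\rho_z=1$, and the first coordinate $1/(2x)$ is nonzero. So the map for $V$ takes values in the closed upper half-plane minus the origin. That set is contractible and contains $(1,0)$, so the straight-line homotopy to the constant map $(1,0)$ avoids the origin. For $C_2$, where $s\in(1/2,1)$, the same argument works with the closed lower half-plane. Hence $V|_{C_i}$ is homotopic to $\partial_y$ as a framing.

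The first coordinate changes sign with $x$, so one cannot simply interpolate linearly between $V$ and $\partial_y$ in the ambient space. The constant sign of $\sin(2\pi s)$ on each $C_i$ is exactly what rescues the argument.

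\textbf{Main obstacle.} I expect the only delicate point to be Step 2: justifying that $(y,g)$ gives a genuine normal trivialization of $C_i$ in the level set, so that framings are correctly encoded by maps to $\R^2\setminus\{0\}$. I would verify this by a direct rank computation of $(d\omega,dy,dg)$ along $C_i$, using that $x$ and $\cos(2\pi s)$ do not vanish simultaneously there.
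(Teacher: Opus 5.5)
Your argument is correct and takes a different, more direct route than the paper, but Step 4 has one slip. The closed upper half-plane minus the origin is contractible, but it is not star-shaped about $(1,0)$. Wherever $\rho_r=0$ and $x<0$, your map takes the value $(1/(2x),0)$ on the negative real axis. This happens, for example, at the point of $C_1$ with $r=0$, $x=-1/\sqrt{2}$, where $V_r$ is not even defined. The segment from such a value to $(1,0)$ passes through the origin; this is the same sign problem you flag in your closing remark. Instead, homotope to $(0,1)$ (resp.\ $(0,-1)$ for $C_2$), about which the region is star-shaped, and then rotate to $(1,0)$ inside the half-plane. Alternatively, just invoke contractibility. (In Step 2, the rank check at points with $x=0$ needs $\sin(2\pi s)\cos(2\pi s)\neq 0$, not only $\cos(2\pi s)\neq 0$.)

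The paper argues differently. It pushes $\partial_y$ along the isotopy, computing $(dH_1^i)(\partial_y)=(d\phi^V_{-\eta})(\partial_y-2xV)$, and reduces the lemma to $\partial_y-2xV\simeq V$ on $C_i$. It then justifies this by asserting that $V$ and $\partial_y$ are linearly independent in $\nu C_i$.

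That assertion fails exactly where your analysis is delicate. At points of $C_i$ with $\rho_r=0$ one has $V=V_z=\frac{1}{2x}\partial_y$. These points necessarily include the two points with $r=0$. There $V$ is parallel to $\partial_y$, with the sign of $x$, and $\partial_y-2xV$ vanishes outright.

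What excludes a nontrivial winding is your trivialization by $(dy,dg)$, together with two observations: $V(g)=2\rho_r\sin(2\pi s)$ has constant sign on each $C_i$, and $x\neq 0$ wherever $\rho_r=0$. So your route avoids the detour through $H_1^i$, which is unnecessary since the lemma concerns $C_i$ only. It also supplies the argument that the paper's one-line justification is missing.
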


\begin{proof}
By the same calculation as that for \Cref{lem:independence}, we obtain:
\[
(dH_1^i)(\partial_y)
=
(d\phi_{-\eta}^V)\bigl(\partial_y - d\eta(\partial_y)V\bigr)
=
(d\phi_{-\eta}^V)(\partial_y - 2xV).
\]
Since $d\phi_{-\eta}^V(V)=V$ and $d\phi_{-\eta}^V$ induces a bijection from the framings of $C_i$ to those of $H_1^i(C_i)$, it suffices to show that $\Pa_y - 2x V$ is homotopic to $V$ as framings of $C_i$. 
It is indeed the case since $V$ and $\Pa_y$ are linearly independent in $\nu C_i$.
\end{proof}

By lifting the local description \eqref{eqn:local description hf} to that for $\widetilde{f}$ around $\widetilde{S}_k$, we can deduce that the two $2$-handles are attached with the framing $\partial_y$. 
In summary, we eventually obtain the following.

\begin{theorem}\label{thm:attcircle framing 2-handle tildeM-fibration}

Each of the circles $C_1$ and $C_2$ in \Cref{cor:handle decomposition tildeM-fibration} can be isotoped in $\widetilde{f}^{-1}(\partial \disk_{1/2-\epsilon})$ so that both circles are contained in a single fiber of $\widetilde{f}$. 
After the isotopy, their configuration is as illustrated in \Cref{fig:attcircle genus1surface}. 
Moreover, the framing of each $2$-handle agrees with the fiber framing.

\end{theorem}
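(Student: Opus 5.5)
The plan is to assemble the theorem from the local analysis already carried out. The question is local to $\nu\widetilde{S}_k$, so via $\widetilde{\Phi}_k$ and a scaling we may work in the model $A\times\C$ with the Morse function $\omega=(-\Re)\circ F_a$ and the level set $\omega^{-1}(-1/2)$. There are three things to establish: the isotopy into a single fiber, the configuration of the isotoped curves, and the framing.

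\textbf{Step 1: the isotopy.} We use the vector field $V=\rho_zV_z+\rho_rV_r$ on $\omega^{-1}(-1/2)$. It satisfies $V(\omega)=0$ and $V(\eta)=1$, so its flow preserves the level set and moves $\eta=\Im\circ F_a$ at unit speed. The isotopies $H^i_t(p)=\phi^V_{-t\eta(p)}(p)$ then carry $C_i$ into $F_a^{-1}(1/2)$. We must check that the flow exists for the required times. Each $C_i$ is compact and $|\eta|\le 1/2$ on it, and $V$ is supported in a region where the flow stays inside the chart, so this holds. We must also check that $H^i_t$ stays an embedding. This follows from injectivity of $\phi^V_{-t\eta(p)}$ on each level of $\eta$ and the fact that $\eta(H_t^i(p))=(1-t)\eta(p)$. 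We also need $H^1_1(C_1)\cap H^2_1(C_2)=\emptyset$. This follows from $C_1\cap C_2=\emptyset$, since the two are separated in the $s$-coordinate on the $\widetilde{\alpha}$-side and the flow acts by diffeomorphisms preserving $\eta$-levels.

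\textbf{Step 2: the configuration.} We project to $A$ by the double branched covering $\widetilde{\alpha}$ of Remark~\ref{rem:branched cover alpha}. Away from $U_2$ the flow fixes $\widetilde{\alpha}$. On $U_2$, under $\Gamma$ it moves only in the imaginary direction, dragging points along the dashed curves $B_1,B_2$ toward the branch points of $\widetilde{\alpha}_{1/2}$. Tracking the solid loci in Figure~\ref{fig:graph branch attcircle} gives the images in Figure~\ref{fig:attcircle annulus1}. A further isotopy of the branch point within $A$, lifted to the branched cover, gives Figure~\ref{fig:attcircle annulus2}. Lifting to the genus-one double cover gives Figure~\ref{fig:attcircle genus1surface}.

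\textbf{Step 3: the framing.} The local description \eqref{eqn:local description hf}, lifted to $\widetilde{f}$, shows that the handle core lies in the $(r,s,x)$-directions. The attaching framing of each handle is therefore the one given by $\partial_y$, the positive-definite normal direction. By Lemma~\ref{lem:framing comparison}, $\partial_y|_{C_i}$ is homotopic to $V|_{C_i}$. By Lemma~\ref{lem:independence}, $(C_{i,t},V)$ is a framed isotopy, so $(C_i,V)$ corresponds to $(H^i_1(C_i),V)$. Since $V(\eta)=1$, the field $V$ is transverse to the fiber, and the framing it induces on $H^i_1(C_i)$ agrees with the fiber framing, which is the standard comparison. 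It follows that the handle framing equals the fiber framing.

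The main obstacle is Step 2: justifying rigorously that the flow of $V$, which mixes $V_z$ and $V_r$ via a partition of unity, produces exactly the pictured curves up to isotopy in $F_a^{-1}(1/2)$. I would argue by monotonicity of $\Im\Gamma\circ\widetilde{\alpha}$ along flow lines in $U_2$, and by constancy of $\widetilde{\alpha}$ outside $U_2$, which together control the projected curves.
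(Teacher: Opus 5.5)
Your Steps 2 and 3 follow the paper's argument essentially verbatim: the flow of $V$, the projection by $\widetilde{\alpha}$ with points dragged along $B_1\sqcup B_2$, then \Cref{lem:framing comparison}, \Cref{lem:independence} and transversality of $V$ to the fiber. The problem is in Step 1, where you assert, and say you need, that $H^1_1(C_1)\cap H^2_1(C_2)=\emptyset$. This is false. Since the support of $\rho_z$ lies in $U_1$, we have $V=V_r$ near $\{z=0\}$. So $V$ is tangent to $\{z=0\}\cap\omega^{-1}(-1/2)=(B_1\sqcup B_2)\times\{0\}$, and on this set $\eta=\Im\Gamma$ is a coordinate. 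The point $([1/\sqrt{2},1/8]_a,0)\in C_1$, where $\eta=1/2$, and the point $([1/\sqrt{2},7/8]_a,0)\in C_2$, where $\eta=-1/2$, both lie on $B_1\times\{0\}$. Hence $H^1_1$ and $H^2_1$ send them to the same point $([1/2,0]_a,0)$, which lies over a branch point of $\widetilde{\alpha}_{1/2}$. Likewise both isotoped curves pass through $([-1/2,1/2]_a,0)$. So in \Cref{fig:attcircle genus1surface} the two circles meet at the two ramification points. The theorem only claims that each circle can be isotoped into the fiber, and this is exactly why the proof of \Cref{prop:diffeo type tildeX0} must supply over/under data at the crossings. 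Your justification also misdescribes the dynamics: since $V(\eta)=1$, the flow does not preserve $\eta$-levels, and $H^i_t$ uses a point-dependent flow time.

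What you should prove about the pair is the following. The map $(q,\tau)\mapsto\phi^V_\tau(q)$, with $q\in F_a^{-1}(1/2)$, gives product coordinates on the relevant part of $\omega^{-1}(-1/2)$. In these coordinates $H^i_t(q,\tau)=(q,(1-t)\tau)$. Hence $H^1_t\sqcup H^2_t$ is an isotopy of the link $C_1\sqcup C_2$ only for $t<1$. At $t=1$ you take the projection and record, at each crossing, which strand had the smaller original value of $\eta$; this is the over/under rule used in the paper.

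The same issue affects your embedding argument. Levelwise injectivity together with $\eta(H^i_t(p))=(1-t)\eta(p)$ gives injectivity of $H^i_t$ only for $t<1$. For $H^i_1$ you need that $C_i$ meets each flow line of $V$ at most once, and this has to be read off from the explicit $\widetilde{\alpha}$-picture, as the paper implicitly does. With the disjointness claim dropped and replaced by this link-isotopy statement, your argument coincides with the paper's.
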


\begin{remark}\label{rem:handle decomposition fibration over S2}

We can also obtain a handle decomposition of the total space of a genus-$g$ $M$-fibration $f':X'\to S^2$ by taking a handle decomposition of ${f'}^{-1}(D)$ in \Cref{cor:handle decomposition M-fibration}, where $D\subset S^2$ is a disk containing $\Critv(f)$, and then capping ${f'}^{-1}(D)$ by $D^2\times N_g$, which consists of one $2$-handle, $g$ $3$-handles and one $4$-handle. 

\end{remark}

\section{Examples of $M$-fibrations}

In this section, we present two examples of $M$-fibrations and study the topology of their total spaces. 
Using the handle decompositions obtained in the previous section, we describe Kirby diagrams of the orientation double coverings and derive applications to the existence problem of Lefschetz fibrations.

As explained in \Cref{sec:MCG}, $N_2^1$ is obtained from $\Sigma_0^3$ by gluing two copies of crosscaps. 
In particular, the surface described in the left-hand side of \Cref{fig:crosscap transposition} is $N_2^1$. 
Let $c,d\subset N_2^1$ be simple closed curves given in the figure.
Applying \Cref{cor:construction M-fibration S2 section} to the relation $u_{c,d}^2=t_{\delta,\theta}$, we obtain a genus-$2$ $M$-fibration $f_0:X_0\to S^2$ and a section $\sigma:S^2\to X_0$ of it such that $f_0$ has two $M$-singularities and the absolute value of the Euler number of the normal bundle of $\sigma(S^2)\subset X_0$ is $1$. 
Composing the orientation double covering $\mathcal{P}_0:\widetilde{X}_0\to X_0$, we also obtain a genus-$1$ $\widetilde{M}$-fibration $\widetilde{f}_0:=f_0\circ \mathcal{P}_0:\widetilde{X}_0\to S^2$ with two sections with self-intersection $1$ and $-1$, respectively. 

\begin{proposition}\label{prop:diffeo type tildeX0}

The manifold $\widetilde{X}_0$ is diffeomorphic to $L_2\sharp \mathbb{CP}^2\sharp \overline{\mathbb{CP}^2}$, where $L_2$ is the $4$-manifold given in \cite{Pao19774mfdtorusaction}.

\end{proposition}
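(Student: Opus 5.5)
Our plan is to draw a Kirby diagram of $\widetilde{X}_0$ from the handle decomposition of \Cref{sec:monodromy M-fibration}--Section~4 and simplify it by Kirby calculus. Since $\widetilde{f}_0$ is a genus-$1$ $\widetilde{M}$-fibration over $S^2$ with two $\widetilde{M}$-singularities, we take a disk $D\subset S^2$ containing both critical values. The preimage $\widetilde{f}_0^{-1}(D\setminus\text{nbhd of }\Critv)$ is $D^2\times\Sigma_1$, which gives one $0$-handle, two $1$-handles and one $2$-handle (the standard $0$-framed Borromean-type diagram of $T^2\times D^2$). By \Cref{cor:handle decomposition tildeM-fibration} and \Cref{thm:attcircle framing 2-handle tildeM-fibration}, each $\widetilde{M}$-singularity contributes two $2$-handles. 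Their attaching circles lie in a regular torus fiber in the configuration of \Cref{fig:attcircle genus1surface}, with fiber framing. We identify the torus fiber with the orientation double cover of $N_2$ and the curves as lifts of the curves of the crosscap transposition $u_{c,d}$. Both singularities correspond to the same crosscap transposition, so the second pair of handles is attached along the same two curves, pushed into a later fiber. Following \Cref{rem:handle decomposition fibration over S2}, we close up with $D^2\times\Sigma_1$: one $2$-handle, two $3$-handles and a $4$-handle. Alternatively, we use the two sections to split off the closing pieces.

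The key concrete step is to determine the two attaching curves in $T^2$ as homology classes. From \Cref{fig:attcircle genus1surface}, as the double branched cover of the annulus $A$ at two points, I expect both curves to be isotopic to the same essential curve $\gamma$, or to curves representing $\gamma$ and $-\gamma$ that are parallel and disjoint. Here $\gamma$ is a lift of the core of $A$, one lift for each sheet. We then convert fiber framing into blackboard framing in the standard $T^2\times D^2$ diagram. A curve lying in a fiber and parallel to a $1$-handle core gets framing $-1$ relative to the fiber in the usual Lefschetz convention; here the $\widetilde{M}$-model is not a Lefschetz model, so the signs are instead those dictated by \Cref{lem:framing comparison}, i.e.\ fiber framing $0$. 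We will obtain four $2$-handles attached along parallel copies of $\gamma$ with framing $0$ relative to the fiber.

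Next we do Kirby calculus. Sliding parallel fiber-framed copies of $\gamma$ over one another produces $0$-framed unknots split off. These unknots either cancel $3$-handles or give $S^2\times S^2$ summands; the latter are ruled out by Euler characteristic and $\pi_1$ bookkeeping. We compute $\chi(\widetilde{X}_0)=2\chi(X_0)=2(\chi(S^2)\chi(N_2)+2\cdot 1)=4$, using that each $M$-singular fiber raises $\chi$ by $\chi(S^1)-\chi(\text{Möbius})+\dots$. This check must match $L_2\sharp\mathbb{CP}^2\sharp\overline{\mathbb{CP}^2}$, which has $\chi=0+2+2=4$ since $\chi(L_2)=0$; that confirms the count. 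The remaining diagram, a $0$-framed $T^2\times D^2$ plus one handle along $\gamma$ with fiber framing, plus the sections, should reduce to Pao's diagram of $L_2$: an $S^2$-bundle-like manifold with $\pi_1=\Z$, given by a $1$-handle and a $2$-handle running twice over it with framing $0$. The $\pm1$ sections are then blown down or split as $\mathbb{CP}^2\sharp\overline{\mathbb{CP}^2}$ summands, using the fact that the intersection form is odd and indefinite with signature $0$ after the splitting.

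The main obstacle is correctly reading off the curves and framings from \Cref{fig:attcircle genus1surface}, and how the two pairs sit relative to each other. The two singularities are related by the monodromy, so the second pair might be twisted by the lift $\mathfrak{o}(u_{c,d})$. Identifying the resulting diagram with Pao's $L_2$ rather than $L_n$ for another $n$ or $S^2\times S^2$ is also delicate. I would first compute $\pi_1$ and the intersection form of the diagram: $L_2$ versus $S^1\times S^3\sharp\dots$ is distinguished by $\pi_1=\Z$ together with the twisting index 2 visible in the $2$-handle winding twice over the $1$-handle. Only then would I perform the explicit slides.
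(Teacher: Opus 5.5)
Your proposal has a genuine gap at exactly the step you flag as the main obstacle: the configuration you expect there is wrong, and the rest of the reduction depends on it.

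\textbf{The attaching circles are not parallel.} Your reduction assumes the two $2$-handles of each $\widetilde{M}$-singularity are attached along disjoint parallel (or anti-parallel) copies of a lift $\gamma$ of the core of $A$. They are not.

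Follow the construction. The points of $C_i$ with $z=0$ stay in $\{z=0\}$ under the flow of $V$, and are dragged along $B_1$, $B_2$ to the two branch points $[1/2,0]_a$ and $[-1/2,1/2]_a$ of $\widetilde{\alpha}_{1/2}$. Hence $H^i_1(C_i)$ is the $\widetilde{\alpha}$-preimage of an arc joining these two branch points. The arc for $C_1$ passes through $0<s<1/2$, the arc for $C_2$ through $1/2<s<1$, and together they run once around $A$.

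Consequences:
\begin{itemize}
\item The two circles cross transversely at the two branch-point preimages. This is why the paper must record over/under data there.
\item In the closed torus fiber they represent classes $a$ and $a+2b$ for a suitable basis, with intersection number $\pm 2$.
\end{itemize}
This is forced. A neighborhood of the singular fiber retracts onto it, and each end of the local regular fiber winds twice around the critical circle. So $H_1(T^2)$ modulo the two curves must be $\Z/2\Z$.

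With your configuration, the six $2$-handles would give $H_1\cong\Z$. Then $b_1=1$ and, since $\chi=4$, $b_2=4$, which is not $L_2\sharp\mathbb{CP}^2\sharp\overline{\mathbb{CP}^2}$. The correct curves give $\pi_1\cong\langle a,b\mid [a,b],a,ab^2\rangle\cong\Z/2\Z$ and $b_2=2$.

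So the slides that split off $0$-framed unknots from parallel copies are unavailable. The paper instead draws the four curves with their crossings (\Cref{fig:Kirby diagram tildeX0 1}) and proceeds by:
\begin{itemize}
\item specific handle slides;
\item one $1$-/$2$-handle cancellation;
\item unlinking from the $(-1)$-framed circle using the $0$-framed meridian;
\item two blow-downs.
\end{itemize}

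\textbf{The target is misidentified.} Pao's $L_n$ is surgery on $S^1\times S^3$ along $n$ times a generator, so $\pi_1(L_2)\cong\Z/2\Z$, not $\Z$. Indeed the diagram you describe, a $2$-handle running twice over a $1$-handle, has $\pi_1\cong\Z/2\Z$, so your distinguishing criterion contradicts itself.

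Likewise $\chi(L_2)=2$, not $0$, and each $\mathbb{CP}^2$ or $\overline{\mathbb{CP}^2}$ summand raises $\chi$ by $1$, not $2$. Your check $0+2+2=4$ agrees only because the two errors cancel.

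Finally, $\pi_1$, $\chi$ and the intersection form do not distinguish $L_2$ from $L_2'$. The last step therefore needs an explicit diagrammatic identification, which the paper obtains by matching its final diagram with \cite[Figure~21]{Hayano2011genusSBLF}.
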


\begin{proof}
By \Cref{cor:handle decomposition tildeM-fibration} together with \Cref{rem:handle decomposition fibration over S2}, the manifold $\widetilde{X}_0$ admits a handle decomposition consisting of one $0$-handle, two $1$-handles, six $2$-handles, two $3$-handles, and one $4$-handle. 
By \Cref{thm:attcircle framing 2-handle tildeM-fibration} and \Cref{fig:attcircle genus1surface}, among these six $2$-handles, the four arising from the $\widetilde{M}$-singularities are arranged as depicted in \Cref{fig:Kirby diagram tildeX0 1}, lying inside the rounded rectangular region formed by the four balls (the attaching regions of the $1$-handles) and the outer attaching circle. 
Note that, in this description, one must take care of the relative over/under information at the crossings of the two attaching circles corresponding to a single $\widetilde{M}$-singularity. 
As explained in the previous section, these circles are not originally contained in a single fiber; however, each can be isotoped into the same fiber. 
At each intersection, the over/under information is determined by the original curves before isotopy: the branch corresponding to the smaller value of $u$ (i.e.,~the imaginary part of the $z$-coordinate, defined in the previous section) lies above. 
\begin{figure}[htbp]
\subfigure[]{\includegraphics{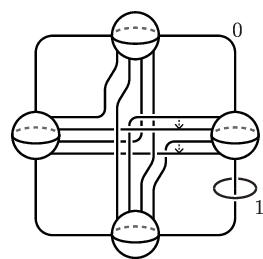}\label{fig:Kirby diagram tildeX0 1}}
\subfigure[]{\includegraphics{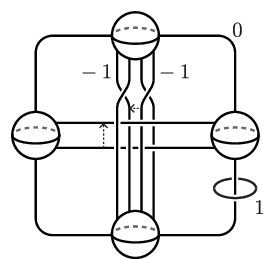}\label{fig:Kirby diagram tildeX0 2}}
\subfigure[]{\includegraphics{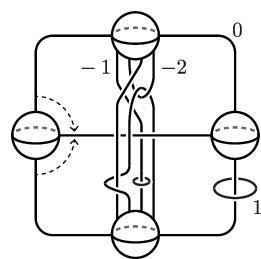}\label{fig:Kirby diagram tildeX0 3}}

\subfigure[]{\includegraphics{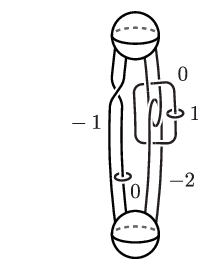}\label{fig:Kirby diagram tildeX0 4}}
\subfigure[]{\includegraphics{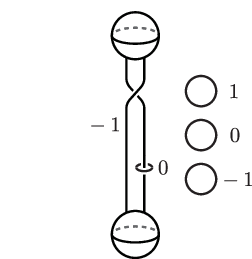}\label{fig:Kirby diagram tildeX0 5}}
\caption{The Kirby diagrams of the manifold $X_0$, where the number of $3$- and $4$-handles are omitted. }
\label{fig:Kirby diagram tildeX0}
\end{figure}

\Cref{fig:Kirby diagram tildeX0 2,fig:Kirby diagram tildeX0 3} are obtained from \Cref{fig:Kirby diagram tildeX0 1,fig:Kirby diagram tildeX0 2}, respectively, by sliding $2$-handles along the dashed arrows.
\Cref{fig:Kirby diagram tildeX0 4} is also obtained from \Cref{fig:Kirby diagram tildeX0 3} by performing two handleslides of a $2$-handle along the dashed arrows, followed by a cancellation of a pair of a $1$-handle and a $2$-handle, and then removing the linkings with the $(-1)$-framed attaching circle using the $0$-framed meridian. 
\Cref{fig:Kirby diagram tildeX0 5} is obtained from \Cref{fig:Kirby diagram tildeX0 4} by blowing down the $1$-framed unknot, and then the resulting $(-1)$-framed unknot, followed by an isotopy of a $2$-handle that moves it away from the $1$-handle. 
According to \cite[Figure~21]{Hayano2011genusSBLF}, the manifold described in \Cref{fig:Kirby diagram tildeX0 5} (with two $3$-handles and one $4$-handle) is $L_2\sharp \mathbb{CP}^2\sharp \overline{\mathbb{CP}^2}$. 
\end{proof}

The surface $N_3$ is obtained from $N_2^1$ by capping the boundary by a crosscap. 
In particular, one can regard $N_2^1$ (and thus simple closed curves $c,d\subset N_2^1$ in \Cref{fig:crosscap transposition}) as a subset of $N_3$. 
Since $u_{c,d}^2 = t_{\delta,\theta}$ in $\mathcal{M}(N_2^1)$ and a Dehn twist along a curve bounding a M\"obius band is trivial, $u_{c,d}^2$ is equal to $1$ in $\mathcal{M}(N_3)$. 
By \Cref{cor:construction M-fibration S2}, we obtain a genus-$3$ $M$-fibration $f_1:X_1\to S^2$ with two $M$-singularities. 
Again, composing the orientation double covering $\mathcal{P}_1:\widetilde{X}_1\to X_1$, we also obtain a genus-$2$ $\widetilde{M}$-fibration $\widetilde{f}_1:=f_1\circ \mathcal{P}_1:\widetilde{X}_1\to S^2$.

\begin{theorem}\label{thm:diffeo type tildeX1}

The manifold $\widetilde{X}_1$ admits no achiral Lefschetz fibration. 
Hence, the non-orientable $4$-manifold $X_1$ admits an $M$-fibration but no Lefschetz fibration.

\end{theorem}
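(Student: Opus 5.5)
The plan is to identify $\widetilde{X}_1$ explicitly, or at least compute enough invariants, and then use known obstructions to achiral Lefschetz fibrations. Following the proof of \Cref{prop:diffeo type tildeX0}, I will first draw a Kirby diagram of $\widetilde{X}_1$. By \Cref{cor:handle decomposition tildeM-fibration} and \Cref{rem:handle decomposition fibration over S2}, $\widetilde{X}_1$ has one $0$-handle and four $1$-handles (the fiber is $\Sigma_2$). It has $1+4$ $2$-handles: one from the capping $D^2\times \Sigma_2$, and four from the two $\widetilde{M}$-singularities, placed inside the fiber as in \Cref{fig:attcircle genus1surface} by \Cref{thm:attcircle framing 2-handle tildeM-fibration}, with fiber framings. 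It also has four $3$-handles and one $4$-handle. The fiber $\Sigma_2$ is the orientation double cover of $N_3$, which is the double cover of $N_2^1$ glued to the lift of the capping crosscap (an annulus). So the diagram is the diagram of \Cref{fig:Kirby diagram tildeX0 1}, with the two $\pm1$-framed section circles removed and an extra $1$-handle pair together with the modified $0$-framed fiber circle added. From this I will compute the Euler characteristic $\chi(\widetilde{X}_1)=1-4+5-4+1=-1$.

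That is the decisive point. An achiral Lefschetz fibration $\widetilde{X}_1\to B$ over a closed surface $B$ with fiber $\Sigma_h$ has
\[
\chi(\widetilde{X}_1)=\chi(B)\chi(\Sigma_h)+k,
\]
where $k\ge 0$ is the number of critical points. The right-hand side is odd only in restricted cases, so $\chi=-1$ alone does not suffice. I would therefore also compute $\pi_1$ and $H_*$ from the diagram. I expect $\pi_1(\widetilde{X}_1)$ to be small, likely finite or trivial after handle cancellations as in the $\widetilde{X}_0$ case, since each $\widetilde{M}$-singularity kills two curves and the section-like structure is absent. If $\pi_1$ is finite, then $B=S^2$, so $\chi = 2(2-2h)+k=-1$. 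This forces $h\ge 2$ and $k=4h-5$.

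The main obstacle is ruling out this remaining case, and here I would use the simplest additional invariant. Alternatively, and more robustly, I would use that $\chi(\widetilde{X}_1)=-1<0$ together with $b_1$. With $\pi_1$ finite, $b_1=b_3=0$, so $\chi=2+b_2\ge 2$, which contradicts $\chi=-1$. The real task is thus to determine $\pi_1(\widetilde{X}_1)$, or at least $b_1$, from the diagram. If instead $b_1=b_3=2$, say, then $\chi=-1$ gives $b_2=1$, and $B$ could be a torus only when $k=-1$, which is impossible. So the general argument goes as follows. Let $\pi_1$ surject onto $\pi_1(B)$. Then $b_1(\widetilde{X}_1)\ge 2g(B)$, and the relation $2-2b_1+b_2=-1$ together with $\chi=(2-2g(B))(2-2h)+k$ must be checked for each possibility. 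I would compute $H_1(\widetilde{X}_1;\Z)$ via the $1$-handle/$2$-handle incidence (the two lifted crosscap-transposition circles in each fiber, as in \Cref{fig:attcircle genus1surface}, and the fiber circle) and finish the case analysis.

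Finally, if some $\widetilde{X}_1$ admitted no achiral Lefschetz fibration but $X_1$ admitted a Lefschetz fibration $X_1\to B'$, then composing with $\mathcal{P}_1$ would give an achiral Lefschetz fibration on $\widetilde{X}_1$, after passing to the orientation cover of the base or taking Stein factorization if $B'$ is non-orientable. This is a contradiction, and it yields the second sentence of \Cref{thm:diffeo type tildeX1}.
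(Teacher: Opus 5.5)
There is a genuine gap, and it starts with the computation you call decisive. Your count of $2$-handles leaves out the $2$-handle of the initial piece $D^2\times\Sigma_2$, the preimage of a disk of regular values. This is the ``$1$'' in the $1+n$ of \Cref{cor:handle decomposition M-fibration}. Adding the four handles from the $\widetilde{M}$-singularities and the capping $2$-handle of \Cref{rem:handle decomposition fibration over S2} gives six $2$-handles, just as for $\widetilde{X}_0$ in the proof of \Cref{prop:diffeo type tildeX0}. Hence $\chi(\widetilde{X}_1)=1-4+6-4+1=0$. In fact $\chi$ of an orientation double cover equals $2\chi(X_1)$, so it can never be $-1$.

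Your guess that $\pi_1$ is finite is also wrong. The paper observes that $\widetilde{X}_1$ arises from $\widetilde{X}_0\cong L_2\sharp\mathbb{CP}^2\sharp\overline{\mathbb{CP}^2}$ by removing neighborhoods of the two $(\pm1)$-sections and gluing in an annulus bundle over $S^2$, namely $S^3\times[0,1]$. This amounts to blowing down twice and then adding a $1$-handle, so $\pi_1(\widetilde{X}_1)\cong(\Z/2\Z)\ast\Z$, $b_1=b_3=1$ and $b_2=0$. Note also that your inequality $\chi=2+b_2\geq 2$ against $\chi=-1$ would only refute finiteness of $\pi_1$, not the existence of a fibration.

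More fundamentally, no case analysis using only $\chi$ and Betti numbers can finish the proof. The manifold $S^1\times S^3$ has exactly the same invariants ($\chi=0$, $b_1=1$, $b_2=0$) and is a torus bundle over $S^2$ (the Hopf fibration times $S^1$). Moreover, over $S^2$ the equation $0=2(2-2h)+k$ has the solution $k=4h-4\geq 0$ for every $h\geq 1$, so nothing numerical excludes higher-genus fibrations.

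The missing ingredients are the ones the paper uses:
\begin{enumerate}
\item An achiral Lefschetz fibration on $\widetilde{X}_1$ is upgraded to a genuine one via \cite[Theorem~8.4.13]{GS19994-mfd}.
\item Gompf's theorem makes the total space of a Lefschetz fibration symplectic unless it is a torus bundle. Since $b_2(\widetilde{X}_1)=0$ rules out a symplectic structure, and $b_1=1$ forces the base to be $S^2$, $\widetilde{X}_1$ would have to be a torus bundle over $S^2$.
\item The fundamental group of such a bundle is a quotient of $\pi_1(T^2)\cong\Z^2$, hence abelian. This contradicts $\pi_1(\widetilde{X}_1)\cong(\Z/2\Z)\ast\Z$.
\end{enumerate}
Your last paragraph, which passes from $\widetilde{X}_1$ to $X_1$ through the orientation double cover, agrees with the paper.
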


\begin{proof}
By construction, $\widetilde{X}_1$ is obtained from $\widetilde{X}_0$ by removing neighborhoods of the two sections of $\widetilde{f}_0$ and gluing the total space of an annulus bundle over $S^2$. 
Since the total space being glued is diffeomorphic to $S^3 \times [0,1]$, it follows that $\widetilde{X}_1$ is obtained from $\widetilde{X}_0$ by blowing down the two sections and then performing surgery at the two points created by the blow-downs.
By \Cref{prop:diffeo type tildeX0}, we obtain $b_1(\widetilde{X}_0)=0$ and $b_2(\widetilde{X}_0)=2$. 
Hence, $b_1(\widetilde{X}_1)$ and $b_2(\widetilde{X}_1)$ are respectively equal to $1$ and $0$. 

If $\widetilde{X}_1$ admits an achiral Lefschetz fibration, we can deduce from \cite[Theorem~8.4.13]{GS19994-mfd} that $\widetilde{X}_1$ would admit a Lefschetz fibration. 
Since $b_2(\widetilde{X}_1)=0$, $\widetilde{X}_1$ admits no symplectic structure, and thus $\widetilde{X}_1$ is the total space of a torus bundle over $S^2$, contradicting the fact $\pi_1(\widetilde{X}_1)\cong \pi_1(\widetilde{X}_0)\ast \Z \cong (\Z/2\Z)\ast \Z$. 
The last statement immediately follows from the fact that the orientation double covering of a Lefschetz fibration is an achiral Lefschetz fibration (cf.~\cite{Yoshikawa2025oridoublecvrLF_preprint}). 
\end{proof}

\begin{remark}
By an argument analogous to the proof of \Cref{prop:diffeo type tildeX0}, namely, by drawing a Kirby diagram of $\widetilde{X}_1$ using \Cref{thm:attcircle framing 2-handle tildeM-fibration} and then transforming the diagram by handle slides and cancellations, one can also prove that $\widetilde{X}_1$ is diffeomorphic to $L_2'\sharp (S^1\times S^3)$, where $L_2'$ is also given in \cite{Pao19774mfdtorusaction}.
\end{remark}

\section{Problems and future directions}

The results of this paper lead to several natural questions concerning
$M$-fibrations and singular fibers on non-orientable $4$-manifolds.
We conclude this paper by listing some of them.

As discussed in Remark~3.9, the author expects that a suitable analogue of the
uniqueness theorem for Lefschetz fibrations should hold for $M$-fibrations.

\begin{problem}
Is the isomorphism class of an $M$-fibration determined by its monodromy
factorization (if the genus of an $M$-fibration is at least three)?
\end{problem}

\noindent
As explained in \Cref{rem:uniqueness M-fibration}, one difficulty is that it is not clear whether a crosscap transposition determines, in an appropriate sense, the isotopy classes of the curves used to define it.

Theorem~5.2 shows that there exist non-orientable $4$-manifolds admitting
$M$-fibrations but no Lefschetz fibrations. 
It would be interesting to understand to what extent $M$-fibrations and Lefschetz fibrations differ from the viewpoint of existence problems.

\begin{problem}
Which non-orientable $4$-manifolds admit $M$-fibrations?
In particular, does there exist a closed non-orientable $4$-manifold which
admits a Lefschetz fibration but does not admit an $M$-fibration?
\end{problem}

The mapping class group of a non-orientable surface is generated not only by Dehn twists but also by additional generators such as $Y$-homeomorphisms and crosscap transpositions. The present paper realizes crosscap transpositions as monodromies of singular fibers, while the corresponding problem for $Y$-homeomorphisms remains open.

\begin{problem}
Does there exist a singular fiber whose monodromy is a $Y$-homeomorphism?
\end{problem}

\appendix

\section{Singularity-theoretic properties of $M$-singularities}

In this appendix, we discuss several singularity-theoretic aspects of the local model
$F_m:M\times \mathbb{C}\to \mathbb{C}$ of an $M$-singularity introduced in \Cref{sec:monodromy M-fibration}.
More specifically, we compute the $\mathcal{A}_e$-codimension of the map-germ $F_m$ at an $M$-singularity, and construct an explicit perturbation of $F_m$ into a stable map.
We use several standard notions and notations from singularity theory, including $\mathcal{E}_n^p$, $tf$, $\omega f$, and $\mathcal{A}_e$-codimension.
For precise definitions, we refer the reader to, for example, \cite{Wall1981findet}. 

\subsection*{$\mathcal{A}_e$-codimension}

We first study the $\mathcal{A}_e$-codimension of the map-germ associated with $F_m$.
Since the quotient map $\pi_m:\R^2 \to M$ is a local diffeomorphism, it is enough to consider the map-germ $f:(\R^4,0)\to(\R^2,0)$ defined by $f(r,s,x,y)=(r\cos s+x^2-y^2,r\sin s+2xy)$, where $z=x+\sqrt{-1}y$.

\begin{proposition}
The map-germ $f$ has infinite $\mathcal{A}_e$-codimension.
\end{proposition}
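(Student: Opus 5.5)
The quickest route is the standard criterion that a germ of finite $\mathcal{A}_e$-codimension has an isolated instability. Via the Mather--Gaffney criterion, finite $\mathcal{A}_e$-codimension (indeed finite $\mathcal{A}$-determinacy) of $f:(\R^4,0)\to(\R^2,0)$ implies that, for a suitable representative, $f$ restricted to the punctured neighborhood is stable. In particular, the critical set of $f$ minus the origin can only consist of stable singularities of maps $\R^4\to\R^2$, namely folds and cusps. Equivalently, in the complexified setting, $\mathrm{Crit}(f)\cap f^{-1}(0)$ must be finite near $0$ when restricted to the non-stable locus.

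I will therefore look at the critical set. As computed for $F_m$, $\mathrm{Crit}(f)=\{r=0,\ x=y=0\}$, which is the $s$-axis, a curve through $0$ mapped entirely to the single value $0$. I will show that no point of this curve is a stable (fold or cusp) point. A fold point of a map $\R^4\to\R^2$ has a critical set that is a surface (codimension $2$) near it, and a cusp point has critical set also a surface. Here the critical set is $1$-dimensional, so every point $(0,s_0,0,0)$ is an unstable singularity. I can also check this directly: at $(0,s_0,0,0)$ the differential has rank $1$, since it is spanned by $\partial_r\mapsto(\cos s_0,\sin s_0)$. A fold would require the intrinsic second derivative on the kernel directions $(s,x,y)$, projected to the cokernel, to be a nondegenerate quadratic form in $3$ variables. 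The cokernel direction is $(-\sin s_0,\cos s_0)$, and the corresponding quadratic part is $\Im(e^{-\sqrt{-1}s_0}z^2)$ in $(x,y)$ with zero coefficient in $s$. This form is degenerate, so the point is not a fold. Cusps are excluded by the dimension of the critical set. Hence $f$ has a non-isolated instability locus, and by Mather--Gaffney $\mathcal{A}_e\text{-codim}(f)=\infty$.

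Alternatively, to stay purely algebraic, I will exhibit infinitely many linearly independent classes in $N\mathcal{A}_e f=\theta(f)/(tf(\theta_4)+\omega f(\theta_2))$. The elements of $\omega f(\theta_2)$ are compositions $\xi\circ f$. The elements of $tf(\theta_4)$ restricted to the curve $\{r=x=y=0\}$ lie in the span of $\partial_r f=(\cos s,\sin s)$, because the other partials vanish there. So for $\theta=(\theta_1,\theta_2)$, the function $-\sin s\,\theta_1+\cos s\,\theta_2$ restricted to the curve is a function of $s$ that is constant ($=\xi(0)$-component) modulo $tf(\theta_4)+\omega f(\theta_2)$. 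The vectors $(0,s^k)$, $k\ge1$, therefore give independent classes, which proves the claim. The main obstacle is making this restriction argument airtight: I need that restriction to $\{r=x=y=0\}$, composed with pairing against $(-\sin s,\cos s)$, sends $tf(\theta_4)$ to $0$ and $\omega f(\theta_2)$ to constants, and these are exactly the two facts checked above.
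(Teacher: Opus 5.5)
You reach the right conclusion, and your algebraic route is essentially the paper's proof, but each of your two arguments contains a false step.

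\textbf{The Mather--Gaffney route.} You assert that fold and cusp points of a map $\R^4\to\R^2$ have $2$-dimensional critical sets, and this is wrong. For the fold $(u,x_1,x_2,x_3)\mapsto(u,x_1^2\pm x_2^2\pm x_3^2)$ the critical set is the $u$-axis. For the cusp $(u,x_1^3+ux_1\pm x_2^2\pm x_3^2)$ it is the curve $\{u=-3x_1^2,\ x_2=x_3=0\}$. In general the fold locus has codimension $n-p+1=3$; compare the paper's perturbation $\mathcal{F}_\varepsilon$, whose critical set is a circle of folds and three cusps. So ``the critical set is $1$-dimensional, hence every point is unstable'' is a non sequitur, and excluding cusps ``by dimension'' fails.

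Your second-derivative computation does correctly rule out folds. It cannot rule out cusps: at a cusp point the intrinsic second derivative also has rank $2$ on the $3$-dimensional kernel, exactly as at $(0,s_0,0,0)$. A valid replacement is that $f$ is constant on its critical curve. For a fold or cusp germ, the restriction of the map to its critical curve is injective (an embedding, resp.\ a parametrized semicubical cusp), so neither can occur. Separately, Mather--Gaffney is usually stated for complex-analytic germs, so applying it to a real $C^\infty$ germ needs a word of justification, e.g.\ via the complexification.

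\textbf{The algebraic route.} Your map $L(\theta)=(-\sin s\,\theta_1+\cos s\,\theta_2)|_{r=x=y=0}$ does vanish on $tf(\theta_4)$. It does not, however, send $\omega f(\theta_2)$ to constants. Since $f\equiv 0$ on the curve, $L(\xi\circ f)=-\xi_1(0)\sin s+\xi_2(0)\cos s$, so $L(T\mathcal{A}_ef)=\langle\sin s,\cos s\rangle_{\R}$.

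The conclusion survives because this space is finite-dimensional. Concretely, suppose $\sum_{k\ge 1}a_ks^k\cos s=b\sin s+c\cos s$. Dividing by $\cos s$ gives $\sum_k a_ks^k=c+b\tan s$. Since $\tan s$ is not a polynomial, $b=0$, and then $c=0$ and all $a_k=0$. So the classes of $(0,s^k)$ are indeed independent in $N\mathcal{A}_ef$.

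With this correction your argument coincides with the paper's. The paper projects onto the $(-\sin s,\cos s)$-component and reduces modulo $\langle r,x,y\rangle_{\mathcal{E}_4}$, which is exactly your $L$, and it identifies $T\mathcal{A}_ef/tf(\mathcal{E}_4^4)$ with $\langle\cos s,\sin s\rangle_{\R}\subset\mathcal{E}_1$. The paper additionally shows that $L$ induces an isomorphism $\mathcal{E}_4^2/tf(\mathcal{E}_4^4)\cong\mathcal{E}_1$. That is more than a lower bound requires: your version only needs that $L$ kills $tf(\theta_4)$ and maps $\omega f(\theta_2)$ into a finite-dimensional space.
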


\begin{proof}
The $\mathcal{E}_4$-module $\mathcal{E}_4^2$ admits the following direct sum decomposition.  
\[
\mathcal{E}_4^2 \cong \left<(\cos s,\sin s)\right>_{\mathcal{E}_4}\oplus \left<(-\sin s,\cos s)\right>_{\mathcal{E}_4}\cong \mathcal{E}_4\oplus \mathcal{E}_4.
\]
Let $\rho:\mathcal{E}_4^2\to \mathcal{E}_4$ be the projection to the second component. 
By direct calculation, one can check that the following formulas hold: 
\[
\rho(tf(e_1)) = r,\quad \rho(tf(e_2)) = 0,\quad\rho(tf(e_3)) = 2(y\cos s -x\sin s), \quad\rho(tf(e_4)) = 2(x\cos s+y\sin s). 
\]
We thus obtain the isomorphism $\mathcal{E}_4^2/tf(\mathcal{E}_4^4) \cong \mathcal{E}_4/\left<r,x,y\right>_{\mathcal{E}_4} \cong \mathcal{E}_1$. 
Since $\rho(f)$ is contained in $\left<r,x,y\right>_{\mathcal{E}_4}$, we obtain the following isomorphism as $\R$-vector spaces:
\[
\mathcal{E}_4^2/tf(\mathcal{E}_4^4)\supset T\mathcal{A}_e f/tf(\mathcal{E}_4^4) \cong \left<\cos s,\sin s\right>_{\R}\subset \mathcal{E}_1.  
\]
By the third isomorphism theorem, we obtain:
\[
\dim_{\R}\left(\frac{\mathcal{E}_4^2}{T\mathcal{A}_ef}\right) = \dim_{\R}\left(\frac{\mathcal{E}_4^2/tf(\mathcal{E}_4^4)}{T \mathcal{A}_ef/tf(\mathcal{E}_4^4)}\right) = \dim_{\R}\left(\frac{\mathcal{E}_1}{\left<\cos s,\sin s\right>_{\R}}\right)=\infty. \qedhere
\]
\end{proof}


\subsection*{Perturbation into a stable map}

We next give a stable perturbation of $F_m$. 
For $\epsilon >0$, we define $\mathcal{F}_{\varepsilon}:M\times \C\to \C$ by $\mathcal{F}_{\varepsilon}([r,s]_m,z)=re^{\pi i s}+z^2+\varepsilon e^{4\pi i s}$. 
Taking the coordinates $(r,\theta=\pi s, x,y)$ ($\theta\in [0,\pi]$) and $\C\cong \R^2$, we have 
\[
\mathcal{F}_{\varepsilon}(r,\theta,x,y)=(r\cos\theta+x^2-y^2+\varepsilon\cos4\theta,r\sin\theta+2xy+\varepsilon\sin4\theta).
\]

\begin{proposition}

For a sufficiently small \(\varepsilon>0\), the critical point set of \(\mathcal{F}_{\varepsilon}\) is a circle consisting only of folds and three cusps.
Moreover, its image has no self-intersections.
Consequently, \(\mathcal{F}_{\varepsilon}\) is infinitesimally stable.

\end{proposition}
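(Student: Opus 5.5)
The approach is a direct computation of the critical set, followed by Morin's criteria applied point by point, and then Mather's characterization of stable maps. Write $\mathcal{F}_\varepsilon=re^{\sqrt{-1}\theta}+z^2+\varepsilon e^{4\sqrt{-1}\theta}$ in complex form. Its partial derivatives are
\[
\partial_r\mathcal{F}_\varepsilon=e^{\sqrt{-1}\theta},\quad
\partial_\theta\mathcal{F}_\varepsilon=\sqrt{-1}(re^{\sqrt{-1}\theta}+4\varepsilon e^{4\sqrt{-1}\theta}),\quad
\partial_x\mathcal{F}_\varepsilon=2z,\quad
\partial_y\mathcal{F}_\varepsilon=2\sqrt{-1}z.
\]

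\textbf{Step 1: the critical set.} When $z\neq 0$, the last two derivatives already span $\C$ over $\R$, so every critical point has $z=0$. At $z=0$, the vectors $e^{\sqrt{-1}\theta}$ and $\sqrt{-1}(re^{\sqrt{-1}\theta}+4\varepsilon e^{4\sqrt{-1}\theta})$ are $\R$-dependent exactly when $\Re(r+4\varepsilon e^{3\sqrt{-1}\theta})=0$, that is, when $r=-4\varepsilon\cos 3\theta$. So $\Crit(\mathcal{F}_\varepsilon)=\{z=0,\ r=-4\varepsilon\cos3\theta\}$. This set is compatible with the identification $(r,\theta)\sim(-r,\theta+\pi)$, because $\cos 3(\theta+\pi)=-\cos3\theta$. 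It is therefore the graph of a section over the core circle of $M$, and hence an embedded circle. On this circle the rank of $d\mathcal{F}_\varepsilon$ is exactly $1$, since $e^{\sqrt{-1}\theta}\neq0$.

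\textbf{Step 2: the image curve, and fold versus cusp.} Substituting the critical set into $\mathcal{F}_\varepsilon$ gives the critical value curve
\[
\gamma(\theta)=-4\varepsilon\cos3\theta\, e^{\sqrt{-1}\theta}+\varepsilon e^{4\sqrt{-1}\theta}=-\varepsilon\bigl(e^{4\sqrt{-1}\theta}+2e^{-2\sqrt{-1}\theta}\bigr).
\]
Put $\varphi=-2\theta$. As $\theta$ runs over $[0,\pi)$, $\varphi$ runs once around the circle, and $\gamma$ becomes the deltoid $-\varepsilon(2e^{\sqrt{-1}\varphi}+e^{-2\sqrt{-1}\varphi})$. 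The deltoid is a simple closed curve, which gives the absence of self-intersections of the image. Its derivative vanishes exactly when $e^{6\sqrt{-1}\theta}=1$, which happens at three values of $\theta$, and these vanishings are simple.

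Next I would apply Morin's criteria to the corank-one singular set $S_1$, whose kernel is $3$-dimensional. First, $j^1\mathcal{F}_\varepsilon$ should be transverse to $S_1$. I would check this by verifying that the three defining functions $x$, $y$ and $r+4\varepsilon\cos3\theta$ of the critical set, which are the relevant minors after rotating the target frame by $e^{-\sqrt{-1}\theta_0}$, have independent differentials. Second, at points where $T\Crit$ is not contained in $\ker d\mathcal{F}_\varepsilon$ (equivalently, $\gamma'\neq0$), the germ is a fold. At the three points where $\gamma'=0$, the tangent line lies in the kernel, and the cusp condition has to be verified.

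For the local computation I would use the adapted coordinate $u=r+4\varepsilon\cos3\theta$ together with the target frame rotated by $e^{-\sqrt{-1}\theta_0}$. In these coordinates the component along $\sqrt{-1}e^{\sqrt{-1}\theta_0}$, restricted to the kernel directions, is $2xy$ plus a function of $\theta$. This exhibits a nondegenerate quadratic part in $(x,y)$ and a cubic term in $\theta-\theta_0$ at the three special points.

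\textbf{Step 3: stability.} By Mather's theorem, infinitesimal stability reduces to two conditions: stability of each monogerm, which holds because folds and cusps are stable, and normal crossings of multigerms. Since $\gamma$ is injective, each critical value has only one critical preimage, so the multigerm condition holds trivially.

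The main obstacle is the cusp verification in Step 2, namely showing that the second intrinsic derivative at the three points is nondegenerate in the sense required by Morin. Here I would first try the explicit normal-form reduction above, using the fact that $x^2-y^2$ and $2xy$ are Morse in the $z$-directions, so that everything reduces to the behaviour of the curve $\gamma$ near its simple cusps.
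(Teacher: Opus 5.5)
Your critical set, the fold criterion ($T_p\Crit(\mathcal{F}_\varepsilon)\not\subset\ker d\mathcal{F}_\varepsilon$, i.e.\ $\gamma'\neq 0$), the deltoid argument for injectivity, and the appeal to Mather are fine. The gap is that the three cusps are not actually proved, and the normal-form sketch you give for them does not work as written. In the coordinates $(u,\theta,x,y)$, with the target rotated by $e^{-\sqrt{-1}\theta_0}$, the first component $\Re(e^{-\sqrt{-1}\theta_0}\mathcal{F}_\varepsilon)$ is not $u$. It agrees with $u$, up to a constant, only to first order at the cusp point. The Whitney/Morin cusp condition concerns the second component restricted to a \emph{level set of the first component}, not to $\{u=0\}$. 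On $\{u=0\}$ the second component is exactly $\Im(e^{-\sqrt{-1}\theta_0}z^2)+\Im(e^{-\sqrt{-1}\theta_0}\gamma(\theta))$. Its $\partial_\theta\partial_u$-derivative equals $\cos(\theta-\theta_0)\neq 0$ at $p$, so the second-order discrepancy between the two slices changes the cubic term. Concretely, at $\theta_0=0$ you get $2xy+8\varepsilon\theta^3+\cdots$ on $\{u=0\}$. On the correct slice $\{\Re\mathcal{F}_\varepsilon=-3\varepsilon\}$, in coordinates $(\theta,x,y)$, you get $2xy-(x^2-y^2)\theta-4\varepsilon\theta^3+\cdots$. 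Both coefficients happen to be nonzero here, but the one you propose to compute is not the invariant the criterion needs, so it cannot certify a cusp.

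There are two clean ways to close this.

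\textbf{The paper's route.} Use Saji's criterion with $\xi_1=\partial_r$, $\eta_1=\partial_x$, $\eta_2=\partial_y$, $\eta_3=\partial_\theta+4\varepsilon\sin3\theta\,\partial_r$ and $\lambda_j=\det(d\mathcal{F}_\varepsilon(\xi_1),d\mathcal{F}_\varepsilon(\eta_j))$. This gives $H=\det(\eta_i\lambda_j)=32\varepsilon\sin3\theta$ along the critical circle, and $d(H|_S)=96\varepsilon\cos3\theta\neq0$ wherever $H=0$. It treats folds and cusps uniformly. The paper also proves injectivity of $\gamma$ by factoring $w_1^2+2w_1^{-1}-w_2^2-2w_2^{-1}$ with $w=e^{2\sqrt{-1}\theta}$, rather than by identifying the deltoid.

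\textbf{Keeping your geometric route.} Add the lemma you were implicitly reaching for. Let $p=c(t_0)$ be a non-degenerate singular point of a map to $\R^2$, with $c$ a regular parametrization of the singular curve and $\gamma=f\circ c$. Then $p$ is a cusp if and only if $\gamma'(t_0)=0\neq\gamma''(t_0)$. To prove it, use the parametrized splitting lemma to reduce to $(u,w)\mapsto(u,h(u,w))$ with $h_w=h_{ww}=0$ and $h_{uw}\neq0$ at the origin. Writing the singular curve as $u=\phi(w)$ gives $\gamma'=\phi'(w)\,(1,h_u)$ and $\gamma''(0)=-\frac{h_{www}}{h_{uw}}\,(1,h_u)$. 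So $\gamma''(0)\neq 0$ is exactly Whitney's condition $h_{www}\neq0$, and this condition is invariant under reparametrization and under source and target diffeomorphisms. Since $\gamma'(\theta)=8\varepsilon\sin3\theta\,e^{\sqrt{-1}\theta}$, the simple zeros you already found then give the three cusps.
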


\noindent
In particular, this proposition shows that the critical value set
of the stable perturbation above is topologically equivalent to
the one arising from a wrinkling, which is a stable perturbation of a Lefschetz
singularity introduced in \cite{Lekili2009wrinkledfibration}.

\begin{proof}
We will use Saji's criteria \cite{Saji2016criteriaMorin} for folds and cusps.
In what follows, we adopt the notation used in \cite{Saji2016criteriaMorin}.
By direct calculation, we obtain:
{\allowdisplaybreaks
\begin{align*}
&d\mathcal{F}_{\varepsilon}(\partial_r)
=
(\cos\theta,\sin\theta),\quad
d\mathcal{F}_{\varepsilon}(\partial_x)
=
(2x,2y),\quad
d\mathcal{F}_{\varepsilon}(\partial_y)
=
(-2y,2x),\\
&d\mathcal{F}_{\varepsilon}(\partial_\theta)
=
(-r\sin\theta-4\varepsilon\sin4\theta,
\,
r\cos\theta+4\varepsilon\cos4\theta).
\end{align*}
}%
Since $d\mathcal{F}_\varepsilon(\Pa_r)\neq 0$, the rank of $d\mathcal{F}_{\varepsilon}$ is at least $1$ everywhere.
Furthermore, calculating the determinants, one can check that $\Crit(\mathcal{F}_\varepsilon)$ is equal to $\{(r,\theta,0,0)~|~ r= -4\varepsilon\cos 3\theta\}$. 

We put
$\xi_1=\partial_r,\eta_1=\partial_x,\eta_2=\partial_y,\eta_3=\partial_\theta+4\varepsilon\sin3\theta\,\partial_r$.
It is easy to check that these vector fields are adapted with respect to the map-germ $\mathcal{F}_\varepsilon$ at any critical point. 
Let $\lambda_i=\det\bigl(d\mathcal{F}_{\varepsilon}(\xi_1),d\mathcal{F}_{\varepsilon}(\eta_i)\bigr)$ for $i=1,2,3$, which are calculated as follows:
{\allowdisplaybreaks
\begin{align*}
\lambda_1
=&
\det
\begin{pmatrix}
\cos\theta & 2x\\
\sin\theta & 2y
\end{pmatrix}
=
2(y\cos\theta-x\sin\theta),\\
\lambda_2
=&
\det
\begin{pmatrix}
\cos\theta & -2y\\
\sin\theta & 2x
\end{pmatrix}
=
2(x\cos\theta+y\sin\theta),\\
\lambda_3
=&
\det
\begin{pmatrix}
\cos\theta &
-r\sin\theta-4\varepsilon\sin4\theta
\\
\sin\theta &
r\cos\theta+4\varepsilon\cos4\theta
\end{pmatrix}=r+4\varepsilon\cos3\theta.
\end{align*}}
The Jacobi matrix of $\Lambda=(\lambda_1,\lambda_2,\lambda_3):\R^4\to\R^3$ at a critical point $(r=-4\varepsilon \cos3\theta, \theta,0,0)$ is 
\[
\begin{pmatrix}
0&0&-2\sin\theta&2\cos\theta\\
0&0&2\cos\theta&2\sin\theta\\
1&-12\varepsilon\sin3\theta&0&0
\end{pmatrix},
\]
whose rank is \(3\). 
Hence every critical point of $\mathcal{F}_\varepsilon$ is non-degenerate in the sense
of \cite[Definition 2.3]{Saji2016criteriaMorin}.

By the direct calculation, one can show that the following equality holds at a critical point $(-4\varepsilon \cos3\theta,\theta,0,0)$:
\[
\mathcal H:=(\eta_i\lambda_j)_{1\le i,j\le3}=\begin{pmatrix}
-2\sin\theta & 2\cos\theta & 0\\
2\cos\theta & 2\sin\theta & 0\\
0&0&-8\varepsilon\sin3\theta
\end{pmatrix}.
\]
Hence we obtain $H:=\det\mathcal H
=
32\varepsilon\sin3\theta$ and a critical point $(-4\varepsilon \cos3\theta,\theta,0,0)$ is $2$-singular in the sense of \cite{Saji2016criteriaMorin} if and only if $\sin 3\theta =0$. 
By \cite[Theorem 3.1]{Saji2016criteriaMorin}, $(-4\varepsilon \cos3\theta,\theta,0,0)$ is a fold if and only if $\sin 3\theta \neq 0$.

Suppose now that $\sin3\theta=0$. 
Then \(H=0\), so the point $p=(-4\varepsilon \cos3\theta,\theta,0,0)$ is \(2\)-singular. 
Since $S(\mathcal{F}_\varepsilon)=\Crit(\mathcal{F}_{\varepsilon})
=
\{(r,\theta,0,0)\mid r=-4\varepsilon\cos3\theta\}$, we may regard \(H|_{S(\mathcal{F}_{\varepsilon})}\) as a function of \(\theta\) and we obtain $d(H|_{S(\mathcal{F}_{\varepsilon})})_p=
96\varepsilon\cos3\theta$. 
Since $\sin3\theta=0$, $\cos3\theta=\pm1$ and thus every \(2\)-singular point is \(2\)-non-degenerate.
Moreover, since $S_2(\mathcal{F}_{\varepsilon})$ is a finite set, any
\(2\)-non-degenerate point cannot be \(3\)-singular. 
By \cite[Theorem 3.1]{Saji2016criteriaMorin}, every \(2\)-singular point is a cusp.

As explained above, $\Crit(\mathcal{F}_\varepsilon)$ is parameterized by $\theta$, and so is $\Critv(\mathcal{F}_\varepsilon)$. 
the image of the critical point $(-4\varepsilon \cos3\theta,\theta,0,0)$ by $\mathcal{F}_\varepsilon$ is
\[
\gamma(\theta)
=
\bigl(
- \varepsilon\cos4\theta-2\varepsilon\cos2\theta,
\,
-\varepsilon\sin4\theta+2\varepsilon\sin2\theta
\bigr)=-\varepsilon e^{4i\theta}-2\varepsilon e^{-2i\theta}.
\]
Suppose $\gamma(\theta_1)=\gamma(\theta_2)$ for some $\theta_1,\theta_2\in [0,\pi]$ with $\theta_1\neq \theta_2$. 
Putting $w_i=e^{2i\theta_i}$, we obtain 
\[
w_1^2+2w_1^{-1}=w_2^2+2w_2^{-1}\Leftrightarrow(w_1-w_2)
\left(
w_1+w_2-\frac{2}{w_1w_2}
\right)
=
0.
\]
Since \(|w_1|=|w_2|=1\), the second factor vanishes only if
\(w_1=w_2\). 
Hence the critical value set has no self-intersections.
\end{proof}

\subsection*{Acknowledgment}

The author would like to thank Yusuke Kuno for asking whether a Y-homeomorphism can be realized as the monodromy of a singular fiber, which motivated the present work. 
The author is also grateful to Susumu Hirose for several valuable comments. 
The author was supported by Japan Society for the Promotion of Science KAKENHI Grant Number JP23K03123.


\end{document}